\newcommand{\Th}[0]{\mathcal{T}}
\newcommand{\Zh}[0]{\mathcal{Z}}
\newcommand{\Bh}[0]{\mathcal{B}}
\newcommand{\Fh}[0]{\mathcal{F}}
\newcommand{\R}[0]{\mathbb{R}}
\newcommand{\N}[0]{\mathbb{N}}
\let\mat=\spalignmat
\let\vec=\spalignvector
\newcommand{\vertiii}[1]{{\left\vert\kern-0.25ex\left\vert\kern-0.25ex\left\vert #1 \right\vert\kern-0.25ex\right\vert\kern-0.25ex\right\vert}}
\newcommand{\norm}[2][]{\| #2 \|_{#1}}
\newcommand{\normbig}[2][]{\left\|#2\right\|_{#1}}
\providecommand\given{} % so it exists
\newcommand\SetSymbol[1][]{
   \nonscript\,#1: \allowbreak \nonscript\,\mathopen{}}
\DeclarePairedDelimiterX\Set[1]{\lbrace}{\rbrace}%
 { \renewcommand\given{\SetSymbol[\delimsize]} #1 }
\newcommand{\expnumber}[2]{{#1}\mathrm{e}{#2}}
\newtheorem{assumption}{Assumption}
\journalname{BIT}
\begin{document}

\title{Hybrid Nitsche method for distributed computing\thanks{This work was supported by the Research Council of Finland (Flagship of Advanced Mathematics for Sensing Imaging and Modelling grant 359181) and the Portuguese government through FCT (Funda\c c\~ao para a Ci\^encia e a Tecnologia), I.P., under the project UIDB/04459/2025.}}

\author{Tom Gustafsson    \and
		Antti Hannukainen \and
		Vili Kohonen 	  \and
		Juha Videman
}

\institute{T. Gustafsson \at
Department of Mechanical Engineering \\
			Aalto University \\
            P.O. Box 11100 00076 Aalto \\ 
            Espoo, Finland\\
              \email{tom.gustafsson@aalto.fi}           %  \\
           \and
           A. Hannukainen \at
Department of Mathematics and Systems Analysis \\
Aalto University \\ 
	\email{antti.hannukainen@aalto.fi}
			\and 
			V. Kohonen \at 
Department of Mathematics and Systems Analysis \\
Aalto University \\ 
	\email{vili.kohonen@aalto.fi}
			\and 
			J. Videman \at 
			CAMGSD/Departamento de Matematica \\
			Instituto Superior Tecnico, Universidade de Lisboa \\
            Av. Rovisco Pais 1, 1049-001 \\ 
            Lisbon, Portugal \\
			\email{jvideman@math.tecnico.ulisboa.pt}
}

% \author[1]{\fnm{Tom} \sur{Gustafsson}} %% Author name
% 			\email{tom.gustafsson@aalto.fi}

% \affil[1]{\orgdiv{Department of Mechanical Engineering},
% 			\orgname{Aalto University}, %Department and Organization
%             \orgaddress{P.O. Box 11100 00076 Aalto}, 
%             \city{Espoo},
%             \country{Finland}
% 			}

% \author[2]{\fnm{Antti} \sur{Hannukainen}} %% Author name
% 			\email{antti.hannukainen@aalto.fi}

% \affil[2]{\orgdiv{Department of Mathematics and Systems Analysis},
% 			\orgname{Aalto University}%Department and Organization
% 			}

% \author[2]{\fnm{Vili} \sur{Kohonen}} %% Author name
% 			\email{vili.kohonen@aalto.fi}

% \author[3]{\fnm{Juha} \sur{Videman}} %% Author name
% 			\email{jvideman@math.tecnico.ulisboa.pt}
% \affil[3]{\orgdiv{CAMGSD/Departamento de Matematica},
% 			\orgname{Instituto Superior Tecnico, Universidade de Lisboa},
%             \orgaddress{Av. Rovisco Pais 1, 1049-001}, 
%             \city{Lisbon},
%             \country{Portugal}}

%\date{Received: \today ~/ Accepted: date}
\date{}

\maketitle

\begin{abstract}
		We extend a distributed finite element method built upon model order reduction to arbitrary polynomial degree using a hybrid Nitsche scheme. The new method considerably simplifies the transformation of the finite element system to the reduced basis for large problems. 
		We prove that the error of the reduced Nitsche solution converges optimally with respect to the approximation order of the finite element spaces and linearly with respect to the dimension reduction parameter. 
		Numerical tests with nontrivial tetrahedral meshes using second-degree polynomial bases support the theoretical results.
	\keywords{finite element method \and partial differential equations \and Nitsche's method \and model order reduction \and distributed computing \and cloud computing}
	\subclass{65F55 \and 65N30 \and 65N55 \and 65Y05}
\end{abstract}
% 	65F55, % Numerical methods for low-rank matrix approximation; matrix compression
% 	65Y05, % parallel numerical computation
% 	65N30, % Finite element, Rayleigh-Ritz and Galerkin methods for boundary value problems involving PDEs
% 	65N55  % Multigrid methods; domain decomposition for boundary value problems involving PDEs

\section{Introduction}
\label{sec:intro}

%The domain decomposition literature has developed many methods that are able to solve increasingly large problems. For computationally heavy approximations of linear elliptic partial differential equations (PDEs), iterative substructuring methods are a popular choice \cite{toselli_domain_2005}. These methods partition the problem domain into nonoverlapping subdomains. Independent local problems are solved with e.g. the finite element method, and the solution continuity over subdomain interfaces is enforced with an iterative scheme. Iterative substructuring methods differ mainly in how they enforce continuity over the interfaces.

%Two popular families of algorithms for parallel computing using domain decomposition are the FETI-DP (Dual Primal Finite Element Tearing and Interconnecting) \cite{farhat2001feti} and the BDDC (Balancing Domain Decomposition by Constraints) \cite{dohrmann2003preconditioner}. The former takes a Lagrange multiplier approach to resolve the interface problem, while the latter utilizes averaging operators. In the end, the two families are theoretically equivalent \cite{mandel2005algebraic}.

%The above methods have been developed such that their convergence is independent of the number of subdomains and the convergence rate decays slowly with respect to the sizes of the subdomains. This results in local preconditioners and a reliance on a massively parallel supercomputer environment with fast interconnect.

Iterative substructuring methods \cite{tosellidomain2005,farhat2001feti,dohrmann2003preconditioner}, or Schur complement methods, are highly scalable in parallel finite element computations.  They rely on a massively parallel supercomputer with fast interconnect. The Schur complement interface system is constructed subdomain-wise in several compute nodes, and the nodes need to communicate during the preconditioned conjugate gradient iteration.
On the other hand, 
if the interface problem can be made sufficiently small and sparse through independent local procedures on each subdomain, the problem could be solved on a single large memory node. This would eliminate the need for internode communication and could take advantage of independent compute nodes, widely and flexibly available in the cloud.
% the cloud provides flexible and widely accessible computing resources and, more importantly, independent compute nodes are abundant. 
% If the interface problem could be manipulated to be sufficiently small and sparse with an independent local procedure, the problem could be solved on a single large memory node without communication.

Optimal local approximation spaces have been used to significantly reduce the dimensionality of the approximations to PDEs in multiscale modelling \cite{babuskaoptimal2011,babuskamachine2014,babuskamultiscalespectral2020}. In essence,
%instead of concentrating the decomposed problem onto the interface,
each subproblem is used to find a local reduced basis and the original problem is projected to a small subspace crafted from the local bases. Several works improved the computational efficiency of forming the local reduced bases using randomized numerical linear algebra \cite{calorandomized2016,buhrrandomized2018,schleusoptimal2022}, albeit testing only small to moderately-sized problems. For large-scale problems, a Laplace eigenvalue problem of 10M degrees-of-freedom was computed in a distributed setting with a similar approach \cite{hannukainendistributed2022}. In \cite{gustafsson2024}, 
we presented an efficient novel methodology to compute the local bases and project the global system to the reduced one and, using only a laptop and the cloud, ran numerical examples with linear finite elements up to 85M degrees-of-freedom for the Poisson problem.
% we showed novel methodology to efficiently compute the local bases and global projection for the finite-dimensional case, and presented numerical examples up to 85M degrees-of-freedom for a Poisson problem using linear finite elements resorting only to a laptop and the cloud.

The methodology in \cite{gustafsson2024} 
employed a first-degree polynomial basis, which simplified the combination of the local bases. In this paper, we generalize this approach to incorporate arbitrary degree polynomial bases using a hybrid Nitsche scheme \cite{egger2009class,egger2012hybrid,burman2019hybridized,hansbo2022nitsche}. 
Hybrid Nitsche is a mortar method that ensures continuity over the subdomain interfaces by introducing a trace variable. The original system is handled subdomain-wise with the trace variable, see Section 2 for the analytical setting and Section 5 for the FEM implementation combined with the model order reduction. The problem thus becomes similar to the interface problem for conventional iterative substructuring methods.
% relied on a first-degree polynomial basis to simplify combining the local bases.
% In this paper, we generalize the work in \cite{gustafsson2024} to arbitrary degree polynomial bases using the hybrid Nitsche scheme \cite{egger2009class,egger2012hybrid,burman2019hybridized,hansbo2022nitsche} for ensuring continuity over the subdomain interfaces. Hybrid Nitsche is a hybrid mortar method that defines a trace variable on the interface, which enforces the solutions on different subdomains to coincide at the interface. The original system is treated subdomain-wise and augmented with the trace variable, see Section \ref{sec:problem} for the analytical setting and Section \ref{sec:implementation} for the FEM implementation combined with model order reduction.
% The problem then becomes similar to the interface problem for the conventional iterative substructuring methods. 

The hybrid Nitsche scheme also streamlines the implementation compared to \cite{gustafsson2024}. The trace variable simplifies the local problems as no extra layer of overlapping elements is required and works identically for any polynomial basis, while partition-of-unity-based approaches require constructing a cutoff function and specialized assembly routines. Further, the reduced linear system of equations is acquired completely from the local operations and neither global assembly nor transformations are required, compared to the careful indexing and global projection that was required in our previous work. While the block-diagonal stiffness matrix could be inverted block-by-block without local model order reduction, this would require a supercomputer environment due to memory requirements. The local reduced bases transform the subdomain stiffness matrices into small diagonal matrices. The Schur complement system then fits into laptop memory even for large original problems. A simple diagonal preconditioner is used to solve the reduced system with the preconditioned conjugate gradient method.

In addition to permitting arbitrary polynomial degree and significantly simplifying the computations, the hybrid Nitsche approach avoids the (nominal) $h^{-1}$ scaling in the partition-of-unity-style reduction error estimate of \cite{gustafsson2024}. The method retains polynomial convergence in the mesh parameter $h$ and now converges linearly with respect to the user-specified dimension reduction tolerance $\epsilon$. The theoretical estimate is supported by numerical examples using second-degree polynomial basis. With small enough tolerance $\epsilon$, the method produces practically identical results to FEM. 

A larger example of 20M degrees-of-freedom, performed on a laptop and the cloud on a challenging domain for the method, shows the method's applicability for large-scale computing. 
Multigrid methods offer an alternative efficient approach for solving large problems on single compute nodes, especially when problem geometries exhibit natural hierarchies and material parameters are simple \cite{briggs2000multigrid,trottenberg2001multigrid,falgout2006introduction}. On the other hand, 
our domain decomposition approach maintains consistent performance across complex geometries and heterogeneous materials without the need for problem-specific tuning. However, the performance for our method is still bounded because we have not constructed a specialized preconditioner for the interface problem, a necessary development for fully utilizing the method. 
% for complex geometries and heterogeneous material properties, multigrid methods require more careful tuning whereas our methodology works without extra steps.

The structure of the paper is as follows. We derive the hybrid Nitsche scheme from the Lagrange multiplier approach to the interface continuity problem in Section \ref{sec:problem}. Then, the local model order reduction methodology is constructed in Section \ref{sec:localmor}. Section \ref{sec:error} is used to prove an error estimate for the model order reduction. 
% that retains polynomial convergence in the mesh parameter $h$ and converges linearly with respect to the dimension reduction tolerance $\epsilon$. 
Section \ref{sec:implementation} shortly describes the FEM implementation, and in Section \ref{sec:numerics} we provide numerical experiments to validate the theoretical claims. Conclusions in Section \ref{sec:conclusions} close the paper.

\section{Problem formulation}
\label{sec:problem}

\subsection{Continuous formulation}

Let $\Omega\subset\R^d, d\in \Set{2,3}$ be a polygonal/polyhedral domain and $\Set{\Omega_i}_{i=1}^n$ its partition into $n$ subdomains. Define the skeleton $\Gamma = \cup_{i=1}^n \partial\Omega_i$ and the trace space 
$V_0 := H^1_0(\Omega)|_\Gamma$.
% $V_0 := \Set{w\in L^2(\Gamma)\given \exists v\in H^1(\Gamma) \text{ s.t. } v|_\Gamma = w}$. 

Assume $f\in L^2(\Omega)$. We examine the minimization problem 
\begin{align}
	\label{eq:energy}
	\inf_{u\in H_0^1(\Omega)} J(u) = \inf_{u\in H_0^1(\Omega)} \frac{1}{2}(\nabla u, \nabla u)_\Omega - (f,u)_\Omega.
\end{align}
For domain decomposition,
we rewrite the functional $J: H^1_0(\Omega)\to \R$ by splitting it with respect to the partition and adding a term to ensure the continuity of the solution across (or up to) the skeleton $\Gamma$. Let $u_0\in V_0$ be the trace variable defined on the skeleton $\Gamma$ and $u_i\in H^1(\Omega_i)$ the solution on $\Omega_i$. Let $u = (u_1, \dots, u_n, u_0)$ and denote by $W := \Pi_{i=1}^n H^1(\Omega_i) \times V_0$ for the solution space. Moreover, define over each $\partial\Omega_i, i=1,\dots, n$ the dual space $\varLambda_i = (H^1(\Omega_i)|_{\partial\Omega_i})'$, and let $\varLambda := \Pi_{i=1}^n \varLambda_i.$ Consider the following saddle point problem:
\begin{align}
	\label{eq:energyaug}
	\inf_{\substack{u_i\in H^1(\Omega_i)\\ u_0\in V_0}} \sup_{\lambda_i\in \varLambda_i} \sum_{i=1}^n \left(\frac{1}{2}(\nabla u_i, \nabla u_i)_{\Omega_i} - (f, u_i)_{\Omega_i} - \langle\lambda_i, u_i - u_0\rangle\right),
\end{align}
where 
% $u_i-u_0$ are the jump terms on the skeleton, and 
$\lambda_i$ are Lagrange multipliers associated with the constraints $u_i=u_0$ on $\partial\Omega_i$, and where by $\langle\cdot, \cdot\rangle: \varLambda_i \times H^1(\Omega_i)|_{\partial\Omega_i}\to \R$ we denote the duality pairing.  Equivalence of problems \eqref{eq:energy} and \eqref{eq:energyaug} can be deduced from the general saddle point theory described, e.g., in \cite{boffi2013mixed}.

% Taking derivatives with respect to all variables and setting them equal to zero, we arrive to a variational problem: 
% find $u \in W$ and $\lambda\in \varLambda$ such that for all $v\in W$ and $\mu \in \varLambda$
The variational formulation of problem \eqref{eq:energyaug} reads as follows:
find $(u,\lambda) \in W\times \varLambda$ such that for all $(v,\mu)\in W\times\varLambda$
\begin{equation}
	\begin{aligned}
		\label{eq:energyvariational}
		\sum_{i=1}^n \bigg( (\nabla u_i, \nabla v_i)_{\Omega_i} - \langle\lambda_i, v_i - v_0\rangle - \langle\mu_i, u_i - u_0\rangle \bigg) = \sum_{i=1}^n (f, v_i)_{\Omega_i} \\
		% \forall v_i\in H^1(\Omega_i), \mu_i\in V_{0,i}', i=1,\dots,n, \text{ and } v_0\in V_0.
	\end{aligned}
\end{equation}
holds.
\begin{remark}
	The strong form of \eqref{eq:energyvariational} is 
	\begin{equation}
		\begin{aligned}%{2}
			\label{eq:strongform}
			-\Delta u_i &= f \quad && \text{in $\Omega_i$},\\
			u_i &= u_0 \quad && \text{on $\partial \Omega_i$}, \\
			\lambda_i &= \frac{\partial u_i}{\partial n_i} \quad && \text{on $\partial \Omega_i$},\\
			u_0 &= 0 \quad && \text{on $\partial \Omega$},\\
		\end{aligned}
	\end{equation}
see \cite{baiocchi1992stabilization}. Here $n_i$ is the exterior unit vector of $\Omega_i$.
\end{remark}

\subsection{Finite element approximation with hybrid Nitsche}

Let $\Th_h$ be a shape regular finite element triangulation/tetrahedralization of $\Omega$ with maximum diameter $h>0$, and $\Th_{h,i}$ the submesh of $\Th_h$ corresponding to $\Omega_i$ with diameter $h_i$. We assume that there exist $c,C>0$ such that $ch\leq h_i\leq Ch$ for all $i=1,\dots,n.$ Further, let $V_i = \Set{w\in H^1(\Omega_i)\given w|_{\partial\Omega}=0}$ and consider finite-dimensional subspaces $V_{h,i}\subset V_i$, $V_{h,0}\subset V_0$, and $\varLambda_{h,i} \subset \varLambda_i$. It is possible to find a finite-dimensional approximate solution 
\begin{align*}
	u_h\in W_h & := \Pi_{i=1}^n V_{h,i} \times V_{h,0} \subset W, \\ 
	\lambda_h \in \varLambda_h & := \Pi_{i=1}^n \varLambda_{h,i} \subset \varLambda,
\end{align*}
for \eqref{eq:energyvariational}. In practice, we use spaces $V_{h,i} = \Set{w_h \in V_i \given w_h|_T\in P^p(T)\: \forall T\in \Th_{h,i}}$, where $p$ is the degree of the polynomial finite element basis.

% Next, a stabilization term scaled by $\alpha h_i > 0$ is added to \eqref{eq:energyvariational} to make the left-hand side bilinear form coercive for the discrete norm \cite{gustafsson2019error}
% \red{Why we use a stabilization term for the saddle point problem, and name the norm and explain from which norm is this discretized}
% \begin{align}
% 	\left(\sum_{i=1}^n \left(\norm[0,\Omega_i]{\nabla u_{h,i}}^2 + h_i \norm[0, \partial\Omega_i]{\lambda_{h,i}}^2\right)\right)^{\frac{1}{2}}.
% \end{align}
% The 
We approximate problem \eqref{eq:energyvariational} by a stabilized FEM where by adding a stabilization term one avoids the Babuska-Brezzi condition and specially constructed finite element spaces, cf. \cite{barbosa1991finite,stenberg1995some}, or in the domain decomposition context \cite{becker2003finite,gustafsson2019error}. After stabilization, the
variational problem becomes: find $(u_h, \lambda_h)\in W_h\times \varLambda_h$ such that for all $(v_h, \mu_h)\in W_h \times \varLambda_h$
\begin{equation*}
	\begin{aligned}
		\label{eq:lagrange}
		\sum_{i=1}^n \bigg((\nabla u_{h,i}, \nabla v_{h,i})_{\Omega_i} - (\lambda_{h,i}, v_{h,i} - v_{h,0})_{\partial\Omega_i} - (\mu_{h,i}, u_{h,i} &- u_{h,0})_{\partial\Omega_i} \\
		- \alpha h_i \left(\lambda_{h,i} - \frac{\partial u_{h,i}}{\partial n_i}, \mu_{h,i} - \frac{\partial v_{h,i}}{\partial n_i}\right)_{\partial\Omega_i}\bigg) & = \sum_{i=1}^n (f, v_{h,i})_{\Omega_i},
	\end{aligned}
\end{equation*}
where $\alpha >0$ is a stabilization parameter.

Assume $\varLambda_{h,i}\subset V_{h,i}|_{\partial\Omega_i} = V_{h,0}|_{\partial\Omega_i}$. The stabilized Lagrange multiplier setting can be manipulated at each $\Omega_i$ into a hybrid Nitsche formulation by finding $\lambda_{h,i}=-\frac{1}{\alpha h_i}(u_{h,i}-u_{h,0}) + \frac{\partial u_{h,i}}{\partial n}$ and substituting $\mu_{h,i}=\frac{1}{\alpha h_i}(v_{h,i}-v_{h,0}) + \frac{\partial v_{h,i}}{\partial n}$, see \cite{gustafsson2019error,stenberg1995some}. 
We then define the desired bilinear form $\Bh_h : W_h\times W_h \to \R$ and linear form $\Fh: W\to \R$ as
\begin{equation}
	\begin{aligned}
		\label{eq:bilinf}
		% &\:\: (u_h, v_h) \mapsto \sum_{i=1}^n \bigg((\nabla u_{h,i}, \nabla v_{h,i})_{\Omega_i} - \left(\frac{\partial u_{h,i}}{\partial n}, v_{h,i} - v_{h,0}\right)_{\partial\Omega_i} \\
			\Bh_h(u_h, v_h) &= \sum_{i=1}^n \bigg((\nabla u_{h,i}, \nabla v_{h,i})_{\Omega_i} - \left(\frac{\partial u_{h,i}}{\partial n_i}, v_{h,i} - v_{h,0}\right)_{\partial\Omega_i} \\
							&\quad - \left(\frac{\partial v_{h,i}}{\partial n_i}, u_{h,i} - u_{h,0}\right)_{\partial\Omega_i} + \frac{1}{\alpha h_i} \left(u_{h,i} - u_{h,0}, v_{h,i} - v_{h,0}\right)_{\partial\Omega_i}\bigg),
	\end{aligned}
\end{equation}
\begin{equation*}
	\begin{aligned}
		\label{eq:linf}
		\Fh(v) = \sum_{i=1}^n (f, v_{i})_{\Omega_i}.
	\end{aligned}
\end{equation*}
The hybrid Nitsche variational problem is: find $u_h\in W_h$ such that 
\begin{equation}
	\begin{aligned}
		\label{eq:nitsche}
		\Bh_h(u_h, v_h) &= \Fh(v_h) \quad\forall v_h\in W_h.
	\end{aligned}
\end{equation}

\medskip

To recapitulate, our approach is the following. We approximate problem \eqref{eq:energy} by splitting $\Omega$ into subdomains and using a hybridized Nitsche finite element method. Through a local model order reduction, we are able to decrease the number of degrees-of-freedom of the approximation significantly and at the same time estimate  the local error with respect to a user-specified tolerance parameter. Our particular interest is in large-scale problems in complex geometries with at least 10 million degrees-of-freedom and our computational environment is restricted to a distributed setting, where compute nodes cannot communicate. On each independent compute node, we create a reduced basis for a subdomain by approximating the solution using a low-rank approximation of a lifting operator. The resulting lower-dimensional problem can be solved on a single large memory node. 

We next describe the local model order reduction scheme.

% To recap, we want to approximate \eqref{eq:energy} by splitting the domain into subdomains and using a variant of finite element method with hybrid Nitsche \eqref{eq:nitsche}. More precisely, we utilize local model order reduction to reduce the degrees-of-freedom of the approximation significantly while controlling the local error with a user-specified tolerance parameter. Our particular interest is large-scale problems on complex geometries with at least $10$ million degrees-of-freedom. Furthermore, we restrict our computational environment to the distributed setting, where compute nodes cannot communicate during computations. On each independent compute node, our model order reduction variant creates a reduced basis for a subdomain by approximating the solution using a low-rank approximation of a lifting operator. After constructing the local reduced bases, the smaller-dimensional problem can be solved on a single large memory node. We next explain the local model order reduction scheme.

\section{Local model order reduction}
\label{sec:localmor}

This section follows closely our previous paper \cite{gustafsson2024}, albeit omitting some of the details. We first extend the partition $\Set{\Omega_i}_{i=1}^n$ given a mesh $\Th_h$. Then, we specify a low-rank approximation problem used in constructing the reduced spaces $\widetilde{V}_{h,i}\subset V_{h,i}$.

% This section follows closely our previous paper \cite{gustafsson2024} but omits some details. We describe how to extend the partition $\Set{\Omega_i}_{i=1}^n$ to create an overlapping partition given a mesh $\Th_h$. Then, we specify a low-rank approximation problem given the extended subdomains that is utilized in constructing the reduced spaces $\widetilde V_{h,i}\subset V_{h,i}$. These spaces provide solutions that converge linearly to conforming finite element solutions with respect to user-specified tolerance $\epsilon > 0$.

We begin by defining the required spaces. Recall that $\Th_{h,i}$ is a local mesh corresponding to the subdomain $\Omega_i$. Our domain decomposition method relies on overlapping subdomains. Hence, each local mesh is extended to a mesh $\Th_{h,i}^+$ by adding elements of $\Th_h$ that are within a fixed distance $r>0$ from $\Th_{h,i}$:
\begin{align*}
\Th_{h,i}^+ = \Set{T\in \Th_{h} \given \inf_{x\in T, y\in \Omega_i} \norm[\ell_2]{x-y}<r},
\end{align*}
where $\norm[\ell_2]{\cdot}$ refers to the Euclidean norm. Each extended local mesh defines an extended subdomain $\Omega_i^+$, where $\partial\Omega_i^+$ does not cut through any elements. The extended subdomains induce finite element spaces 
\begin{align*}
V_{h,i}^+ = \Set{w\in H^1(\Omega_i^+)\given w|_{\partial\Omega}=0, w|_T\in P^p(T)\:\forall T \in \Th_{h,i}^+}.
\end{align*}
In Section \ref{sec:numerics}, we demonstrate how increasing the extension parameter $r$ results in smaller reduced bases at the expense of larger local problems. 

% Lastly, we define the finite element space
% \begin{align*}
% 	V_h^C = \Set{w\in H_0^1(\Omega)\given w|_T\in P^p(T)\:\forall T \in \Th_h}\subset H^1_0(\Omega)
% \end{align*}
% wherein an auxiliary conforming finite element solution is found, cf. Section \ref{sec:error}. Although it may seem strange that we are introducing yet another finite element space and solution, this makes the practical implementation of the low-rank approximation more straightforward. 
% The space $V_h^C$ will be used to describe boundary conditions of local problems such that the error induced by using the reduced basis compared to the original basis can be bounded.

% The extended subdomains enable us to define local problems that are used to construct $\widetilde V_{h,i}$: 
In the extended subdomains, we define the local problems:
for $g_h\in V_{h,i}^+|_{\partial\Omega_i^+}$, find $w_{h,i}\in V_{h,i}^+$ as the finite element solution of 
\begin{equation}
\begin{aligned}%{2}
\label{eq:localproblem}
-\Delta w_i &= f \quad && \text{in $\Omega_i^+$},\\
w_i &= g_h \quad && \text{on $\partial \Omega_i^+$}.
\end{aligned}
\end{equation}
The solution $w_i$ can be written as a sum of two terms, where one accounts for the load $f$ with zero boundary condition and another for the boundary condition $g_h$ with zero load. This separation is essential in constructing the local reduced spaces $\widetilde{V}_{h,i}$, see Definition \ref{def:reducedspace} below.
% This separation will be utilized to construct the local reduced spaces.

The following restricted lifting operator provides a tool to find small bases using local finite element trace spaces, regardless of the unknown local boundary condition.
\begin{definition}[$\Zh_i$ operator]
Let $\Zh_i : V_{h,i}^+|_{\partial\Omega_i^+} \to V_{h,i}$ be such that for any $g_h\in V_{h,i}^+|_{\partial\Omega_i^+}$ 
the map $\Zh_i g_h = w_{h,i}^{g}|_{\Omega_i} \in V_{h,i}$ is the finite element solution of 
% let $\Zh_i g_h = w_{h,i}^{g}|_{\Omega_i} \in V_{h,i}$ be the finite element solution of 
\begin{equation*}
\begin{aligned}%{2}
\label{eq:zeroload}
-\Delta w_i^g &= 0 \quad && \text{in $\Omega_i^+$},\\
w_i^g &= g_h \quad && \text{on $\partial \Omega_i^+$},
\end{aligned}
\end{equation*}
restricted to $\Omega_i$.
\end{definition}
% \red{Notice that the trace space is defined on the restriction of the conforming finite element space // eikö meidän avaruuksissa $V_h^C|_{\partial\Omega_i^+} = \partial V_{h,i}^+$?. This is to manage the reduced basis error when we prove our final error estimate.}

Next, we define the norms for functions $v_h\in V_{h,i}$ and $g_h\in V_{h,i}^+|_{\partial\Omega_i^+}$
\begin{align}
\label{eq:hinorm}
\norm[h,i]{v_h} & = \left(\norm[0,\Omega_i]{\nabla v_h}^2 + \frac{1}{h_i}\norm[0,\partial\Omega_i]{v_h}^2\right)^{\frac{1}{2}} \\
\label{eq:tracenorm}
\norm[1/2,\partial\Omega_i^+]{g_h} & = \min_{\substack{v_h\in V_{h,i}^+\\ v_h|_{\partial\Omega_i^+}=g_h}}\norm[1, \Omega_i^+]{v_h},
\end{align}
where $\norm[0]{\cdot}$ is the $L^2$ norm and $\norm[1]{\cdot}$ the $H^1$ norm.
We can now define the low-rank approximation of $\Zh_i$ in the desired norm given the user-specified tolerance parameter.
\begin{definition}[Low-rank approximation of $\Zh_i$]
\label{def:lowrankapprox}
Fix $\epsilon>0$. Then, $\widetilde\Zh_i$ is defined as the lowest rank approximation of $\Zh_i$ that satisfies
% Fix $\epsilon>0$. Then, $\widetilde\Zh_i$ is defined as the lowest rank approximation of $\Zh_i: V_{h,i}^+|_{\partial\Omega_i^+} \to V_{h,i}$ that satisfies
\begin{align}
\label{eq:zopapprox}
\norm{\Zh_i-\widetilde\Zh_i} = \max_{g_h\in V_{h,i}^+|_{\partial\Omega_i^+}} \frac{\norm[h,i]{(\Zh_i - \widetilde \Zh_i)g_h}}{\norm[1/2, \partial\Omega_i^+]{g_h}}\leq \epsilon.
\end{align}
\end{definition}

The low-rank approximation $\widetilde\Zh_i$ implicitly defines a subspace $\widetilde V_{h,i}^+|_{\partial\Omega_i^+}\subset V_{h,i}^+|_{\partial\Omega_i^+}$, and we define the parameters
\begin{align*}
\dim(V_{h,i})&=m_i, \\ 
\dim(V_{h,i}^+|_{\partial\Omega_i^+})&=K_i, \\
\dim(\widetilde V_{h,i}^+|_{\partial\Omega_i^+})&=k_i,
\end{align*}
for which $k_i\ll K_i \ll m_i$. Notice that $\mathrm{rank}(\Zh_i)=K_i$ and $\mathrm{rank}(\widetilde\Zh_i) = k_i$. 
This brings us to the key definition, the reduced space $\widetilde V_{h,i}$.

\begin{definition}[Reduced space $\widetilde V_{h,i}$]
\label{def:reducedspace}
Let $f\in L^2(\Omega_i^+)$ and $w_{h,i}^f|_{\Omega_i}\in V_{h,i}$ be the finite element solution of 
\begin{equation*}
\begin{aligned}%{2}
% \label{eq:fload}
-\Delta w_i^f &= f \quad && \text{in $\Omega_i^+$},\\
w_i^f &= 0 \quad && \text{on $\partial \Omega_i^+$},
\end{aligned}
\end{equation*}
restricted to $\Omega_i$. The reduced space $\widetilde V_{h,i}$ is defined as $$\widetilde V_{h,i} = \mathrm{span}(w_{h,i}^f|_{\Omega_i}) \oplus \mathrm{range}(\widetilde \Zh_i).$$
\end{definition}
The basis function $w_{h,i}^f$ can be solved trivially given $V_{h,i}^+$. The operator $\Zh_i$ and its low-rank approximation can be similarly constructed given the local extended finite element space. The reduced space $\widetilde V_{h,i} \subset V_{h,i}$ is problem dependent; while the generic finite element space $V_{h,i}$ can readily approximate any load from $L^2(\Omega_i)$, the reduced space $\widetilde V_{h,i}$ is specifically crafted to provide an approximate solution given load $f$. This decreases the basis by $m_i-K_i -1$ functions. The dimensionality is further reduced by the low-rank approximation of $\Zh_i$. 
% that only incorporates singular functions on the extension boundary with singular values at least the tolerance $\epsilon$. 
Effectively, the other $K_i-k_i$ dimensions are treated as noise and zeroed out. Hence, the dimensionality reduction on a subproblem is $$\dim(V_{h,i}) = m_i \gg m_i - (m_i - K_i -1) - (K_i -k_i) = k_i + 1 = \widetilde{k_i} = \dim(\widetilde V_{h,i}).$$ Figure \ref{fig:zspectrum} showcases how the spectrum of $\Zh_i$ exhibits almost exponential decay so that the low-rank approximation and $\widetilde{k_i}$ are truly small. 

The finite element solutions from $V_{h,i}$ and $\widetilde V_{h,i}$ satisfy the following error bound. 
\begin{lemma}[Local error]
\label{lemma:localerror}
Fix $\epsilon >0$ and $g_h\in V_{h,i}^+|_{\partial\Omega_i^+}$. Let $w_{h,i}|_{\Omega_i}\in V_{h,i}$ and $\widetilde w_{h,i}|_{\Omega_i}\in \widetilde V_{h,i}$ be restrictions of finite element solutions to \eqref{eq:localproblem}. 
Further, let $w_{h,i}\in V_{h,i}^+$ such that $\norm[1/2, \partial\Omega_i^+]{g_h} = \norm[1,\partial\Omega_i^+]{w_{h,i}}$. Then 
% Further, let $w_{h}\in V_{h,i}^+$ be the function related to $g_h$ through \eqref{eq:tracenorm}. Then 
\begin{align*}
% \label{eq:localerror}
\norm[h,i]{w_{h,i}|_{\Omega_i} - \widetilde w_{h,i}|_{\Omega_i}} \leq \epsilon\norm[1, \Omega_i^+]{w_{h,i}}.
\end{align*}
\end{lemma}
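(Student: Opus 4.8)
The plan is to reduce the estimate to the operator-norm bound \eqref{eq:zopapprox} by exploiting the linearity of the local problem \eqref{eq:localproblem}. First I would split the full finite element solution into a load part and a boundary part: by linearity, the solution $w_{h,i}$ on $\Omega_i^+$ decomposes as $w_{h,i} = w_{h,i}^f + w_{h,i}^g$, where $w_{h,i}^f$ solves \eqref{eq:localproblem} with load $f$ and homogeneous boundary data, and $w_{h,i}^g$ solves it with zero load and boundary data $g_h$. Restricting to $\Omega_i$ and invoking the definition of the lifting operator, this gives
\[
w_{h,i}|_{\Omega_i} = w_{h,i}^f|_{\Omega_i} + \Zh_i g_h.
\]

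Next I would identify the reduced solution. Since $\widetilde V_{h,i} = \mathrm{span}(w_{h,i}^f|_{\Omega_i}) \oplus \mathrm{range}(\widetilde\Zh_i)$ by Definition \ref{def:reducedspace}, the load contribution $w_{h,i}^f|_{\Omega_i}$ already lies exactly in the reduced space, while the boundary contribution is represented through the low-rank operator. Hence the reduced solution takes the form $\widetilde w_{h,i}|_{\Omega_i} = w_{h,i}^f|_{\Omega_i} + \widetilde\Zh_i g_h$, and the load terms cancel when forming the difference, leaving
\[
w_{h,i}|_{\Omega_i} - \widetilde w_{h,i}|_{\Omega_i} = (\Zh_i - \widetilde\Zh_i)\, g_h.
\]

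The remaining steps are then direct. Measuring in the $\norm[h,i]{\cdot}$ norm and applying the operator-norm bound \eqref{eq:zopapprox} gives
\[
\norm[h,i]{w_{h,i}|_{\Omega_i} - \widetilde w_{h,i}|_{\Omega_i}} = \norm[h,i]{(\Zh_i - \widetilde\Zh_i)\, g_h} \leq \epsilon \, \norm[1/2,\partial\Omega_i^+]{g_h},
\]
and substituting the hypothesis $\norm[1/2,\partial\Omega_i^+]{g_h} = \norm[1,\Omega_i^+]{w_{h,i}}$, which realizes the minimum in the trace-norm definition \eqref{eq:tracenorm}, completes the estimate.

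The main obstacle I anticipate is not the chain of inequalities, which is short, but pinning down precisely what the \emph{reduced} finite element solution means so that the load part cancels exactly. One must verify that the natural construction of $\widetilde w_{h,i}$ — representing the load contribution exactly within $\mathrm{span}(w_{h,i}^f|_{\Omega_i})$ and approximating only the harmonic lifting $\Zh_i g_h$ by $\widetilde\Zh_i g_h$ — is consistent with the decomposition above and introduces no cross terms. A second, more cosmetic point is the overloading of the symbol $w_{h,i}$: in the difference it denotes the solution of \eqref{eq:localproblem} with load $f$, whereas in the right-hand side it is the norm-minimizing extension of $g_h$; since only the $g_h$-dependence enters the operator-norm step, this causes no difficulty. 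Once the identification of $\widetilde w_{h,i}$ is justified, the operator-norm definition of $\widetilde\Zh_i$ carries all the weight and the bound follows immediately.
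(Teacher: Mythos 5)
Your proposal is correct and follows essentially the same route as the paper's proof: decompose the local solution into the load part $w_{h,i}^f$ and the lifted boundary part, cancel the load contributions, reduce the difference to $(\Zh_i - \widetilde\Zh_i)g_h$, and apply the operator-norm bound \eqref{eq:zopapprox} together with the trace-norm identity. The paper's proof is just a terser version of the same chain of equalities and inequalities.
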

\begin{proof}
Per Definitions \ref{def:lowrankapprox} and \ref{def:reducedspace}, 
\begin{align*}
\norm[h,i]{w_{h,i}|_{\Omega_i} - \widetilde w_{h,i}|_{\Omega_i}} & =  \norm[h,i]{(w_{h,i}^f + w_{h,i}^g)|_{\Omega_i} - (w_{h,i}^f + \widetilde w_{h,i}^g)|_{\Omega_i}} \\
														 & = \norm[h,i]{w_{h,i}^g|_{\Omega_i} - \widetilde w_{h,i}^g|_{\Omega_i}} \\
														 & = \norm[h,i]{(\Zh_i - \widetilde \Zh_i)g_h} \\
														 & \leq \norm{\Zh_i - \widetilde \Zh_i}\norm[1/2, \partial\Omega_i^+]{g_h} \\
														 & \leq \epsilon\norm[1, \Omega_i^+]{w_{h,i}}.
\end{align*}
\end{proof}
Lemma \ref{lemma:localerror} presents how we can control the local error by choosing a suitable $\epsilon >0$. The result plays an important role in our final error estimate.

\begin{remark}
As we have defined $\Zh_i$ with respect to finite-dimensional spaces, there exist matrix representations of the above. In fact, given any two positive definite matrices $\bm M\in\R^{m\times m}, \bm N\in \R^{n\times n}$, we can define a norm for a general matrix $\bm A\in\R^{m\times n}$ through $$\norm[MN]{\bm A} = \norm[2]{\bm M^{\frac{1}{2}}\bm A\bm N^{-\frac{1}{2}}} = \max_{\substack{\bm x\in\R^n, \\ \bm x \neq \bm 0}}\frac{\norm[\ell_2]{\bm M^{\frac{1}{2}}\bm A\bm N^{-\frac{1}{2}}\bm x}}{\norm[\ell_2]{\bm x}},$$ where we use the spectral norm $\norm[2]{\cdot}$ induced by the Euclidean vector norm.

Let $\bm Z_i$ and $\bm{\widetilde Z}_i$ be the matrix representations of $\Zh_i$ and $\widetilde{\Zh}_i$, respectively. Then, the low-rank matrix approximation problem is: given $\epsilon > 0$ and $\bm{Z}_i$
, find $\bm{\widetilde Z}_i$ such that  
\begin{align}
\label{eq:zmatapprox}
\norm[MN]{\bm Z_i-\bm{\widetilde Z}_i}<\epsilon,
\end{align}
where $\bm M$ incorporates the norm $\norm[h,i]{\cdot}$ and $\bm N$ the norm $\norm[1/2, \Omega_i^+]{\cdot}$. By the classical Eckart-Young-Mirsky Theorem, the best rank $k$ approximation to $\bm Z_i$ is obtained by truncated SVD of $\bm M^{\frac{1}{2}}\bm Z_i \bm N^{-\frac{1}{2}}$. Then the error $\norm[MN]{\bm Z_i-\bm{\widetilde Z}_i}$ is equal to the $k+1$th largest singular value of $\bm M^{\frac{1}{2}}\bm Z_i \bm N^{-\frac{1}{2}}$. The difference here with respect to our previous paper \cite{gustafsson2024} is the change made in $\bm M$ due to the mesh-dependent norm $\norm[h,i]{\cdot}$. The technicalities how to compute the low-rank approximation \eqref{eq:zmatapprox} and build the reduced basis efficiently -- important details for large-scale computing -- are described in \cite{gustafsson2024}.
% instead of $\norm[1,\Omega_i]{\cdot}$. 
\end{remark}
\begin{remark}
    Another modification to our previous work is that the trace variable $u_{h,0}$ enforces the continuity instead of the stitching operator \cite[Definition 3.4]{gustafsson2024} that requires an additional layer of overlapping elements. This is a major simplification for local indexing and creating the global reduced basis in Section \ref{sec:implementation}.
\end{remark}

\section{Error analysis}
\label{sec:error}

Section \ref{sec:localmor} concentrated on local problems to construct local reduced spaces $\widetilde V_{h,i}, i=1,\dots,n$ given a user-specified tolerance, described in Definition \ref{def:reducedspace}. With these local reduced spaces, we can approximate the Nitsche solution with a solution from the global reduced space $\widetilde W_h := \Pi_{i=1}^n \widetilde V_{h,i} \times V_{h,0}\subset W_h$. Given $\epsilon >0$, the reduced Nitsche variational problem is: find $\tilde u_h\in \widetilde W_h$ such that
\begin{equation}
\begin{aligned}
\label{eq:rednitsche}
\Bh_h(\tilde u_h, \tilde v_h) &= \Fh(\tilde v_h)\quad \forall \tilde v_h\in\widetilde W_h .
\end{aligned}
\end{equation}

We now derive our error estimate for the reduced Nitsche solution in a mesh dependent norm. Several supporting results are developed after which Theorem \ref{thm:error} presents an error bound. The reduced solution $\tilde u_h$ converges to the continuous solution $u$ polynomially with respect to mesh diameter $h$ and linearly with respect to tolerance $\epsilon$.

The error estimate is derived for the $h$ norm
\begin{align}
\label{eq:hnorm}
\norm[h]{v_h} &= \left(\sum_{i=1}^n \norm[0, \Omega_i]{\nabla v_{h,i}}^2 + \frac{1}{h_i}\norm[0, \partial\Omega_i]{v_{h,i}-v_{h,0}}^2\right)^{\frac{1}{2}}.
\end{align}
\begin{remark}
% The $h$ norm \eqref{eq:hnorm} includes a jump on $\partial\Omega_i$ while $(h,i)$ norm \eqref{eq:hinorm} does not. The jump is needed when proving the coercivity of the bilinear form $\Bh_h$ with respect to the $h$ norm. However, the $(h,i)$ norm is just notational convenience used for defining the $\Zh_i$ operator and deriving the local error Lemma \ref{lemma:localerror}. Lemma \ref{lemma:reductionhnormerror} uses a test function such that the reduction trace error is nullified. 
Norm \eqref{eq:hnorm} includes terms estimating the difference between the trace of $u_{h,i}$ and $u_{h,0}$ on $\partial\Omega_i$. These terms allow us to show coercivity of the bilinear form \eqref{eq:bilinf} with respect to the mesh-dependent norm \eqref{eq:hnorm}.

Note that the previously defined mesh-dependent norm \eqref{eq:hinorm} was used only locally for defining the $\Zh_i$ operator and estimating the error of the low-rank approximation in Lemma \ref{lemma:localerror}.
\end{remark}

% In what follows, we denote by $C$ a generic positive constant, independent of $h$, whose value may change from estimate to estimate. 
First, we present an auxiliary result that can easily be proven using a standard scaling argument. 
% For all $i=1,\dots,n,$ it holds 
There exists a constant $C_I>0$, independent of $h_i$ and $i$, such that for all $i=1,\dots,n,$ it holds
\begin{align}
\label{eq:traceineq}
h_i\normbig[0, \partial\Omega_i]{\displaystyle\frac{\partial v_{h,i}}{\partial n_i}}^2 \leq C_I \norm[0,\Omega_i]{\nabla v_{h,i}}^2 \quad \forall v_{h,i}\in V_{h,i}.
\end{align}

We continue with a lemma on stability. In what follows, we denote by $C$ a generic positive constant, independent of $h$, whose value may change from estimate to estimate.  
\begin{lemma}[Coercivity of $\Bh_h$]
\label{lemma:coercivity}
Assume $0 < \alpha < C_I^{-1}$. Then 
% there exists $C_S>0$, independent of $h$, such that
\begin{align*}
\Bh_h(v_h, v_h) \geq C\norm[h]{v_h}^2 \quad \forall v_h\in W_h.
\end{align*}
\end{lemma}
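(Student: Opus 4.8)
The plan is to prove coercivity by testing the bilinear form \eqref{eq:bilinf} against $v_h$ itself and showing that the indefinite cross terms can be absorbed into the positive diagonal terms using the trace inequality \eqref{eq:traceineq} together with a weighted Young's inequality. Writing out $\Bh_h(v_h,v_h)$, the gradient terms and the penalty terms are manifestly nonnegative, while the two symmetric consistency terms $-2(\partial v_{h,i}/\partial n_i, v_{h,i}-v_{h,0})_{\partial\Omega_i}$ have indefinite sign and must be controlled.

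First I would write
\begin{align*}
\Bh_h(v_h, v_h) = \sum_{i=1}^n \bigg( \norm[0,\Omega_i]{\nabla v_{h,i}}^2 - 2\left(\frac{\partial v_{h,i}}{\partial n_i}, v_{h,i}-v_{h,0}\right)_{\partial\Omega_i} + \frac{1}{\alpha h_i}\norm[0,\partial\Omega_i]{v_{h,i}-v_{h,0}}^2 \bigg).
\end{align*}
Next I would bound the cross term from below via Cauchy--Schwarz followed by Young's inequality with a free parameter $\delta>0$, namely
\begin{align*}
2\left|\left(\frac{\partial v_{h,i}}{\partial n_i}, v_{h,i}-v_{h,0}\right)_{\partial\Omega_i}\right| \leq \delta\, h_i\normbig[0,\partial\Omega_i]{\frac{\partial v_{h,i}}{\partial n_i}}^2 + \frac{1}{\delta h_i}\norm[0,\partial\Omega_i]{v_{h,i}-v_{h,0}}^2.
\end{align*}
Applying the trace inequality \eqref{eq:traceineq} to the first term on the right turns it into $\delta C_I \norm[0,\Omega_i]{\nabla v_{h,i}}^2$, so that after summation the gradient contribution carries the factor $(1-\delta C_I)$ and the boundary penalty contribution carries the factor $(\tfrac{1}{\alpha}-\tfrac{1}{\delta})\tfrac{1}{h_i}$.

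The remaining task is to choose $\delta$ so that both factors are strictly positive. This requires $\delta < C_I^{-1}$ for the gradient term and $\delta > \alpha$ for the penalty term, and such a $\delta$ exists precisely because the hypothesis $\alpha < C_I^{-1}$ guarantees the nonempty interval $(\alpha, C_I^{-1})$; choosing, for instance, $\delta$ as the midpoint yields a uniform lower bound. Collecting terms then gives $\Bh_h(v_h,v_h) \geq C \sum_{i=1}^n (\norm[0,\Omega_i]{\nabla v_{h,i}}^2 + \tfrac{1}{h_i}\norm[0,\partial\Omega_i]{v_{h,i}-v_{h,0}}^2)$ with $C = \min\{1-\delta C_I, 1 - \alpha/\delta\}>0$, which is exactly $C\norm[h]{v_h}^2$ by the definition \eqref{eq:hnorm}. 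The main obstacle, though entirely routine here, is the bookkeeping of the constants to confirm that the admissibility window for $\delta$ is nonempty under the stated assumption on $\alpha$; since the trace constant $C_I$ is uniform in $h_i$ and $i$ by \eqref{eq:traceineq}, the resulting coercivity constant is independent of $h$, as claimed.
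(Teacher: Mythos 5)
Your proposal is correct and follows essentially the same route as the paper's proof: expand $\Bh_h(v_h,v_h)$, control the consistency cross term via Cauchy--Schwarz and Young's inequality with a parameter $\delta$, absorb the normal-derivative term through the trace inequality \eqref{eq:traceineq}, and pick $\delta\in(\alpha,C_I^{-1})$, which is nonempty exactly under the hypothesis $\alpha<C_I^{-1}$. The only nitpick is your explicit constant: the penalty coefficient that comes out is $\tfrac{1}{\alpha}-\tfrac{1}{\delta}$ rather than $1-\alpha/\delta$, but since both are positive for $\delta>\alpha$ this does not affect the conclusion.
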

\begin{proof}
Using Cauchy-Schwarz, \eqref{eq:traceineq} and Young's inequality,
\begin{align*}
& \Bh_h(v_h, v_h) \\
& = \sum_{i=1}^n \bigg((\nabla v_{h,i}, \nabla v_{h,i})_{\Omega_i} - 2\left(\frac{\partial v_{h,i}}{\partial n}, v_{h,i}-v_{h,0}\right)_{\partial\Omega_i} + \frac{1}{\alpha h_i} \left(v_{h,i}-v_{h,0}, v_{h,i}-v_{h,0}\right)_{\partial\Omega_i}\bigg) \\
& \geq \sum_{i=1}^n \bigg(\norm[0,\Omega_i]{\nabla v_{h,i}}^2 - 2\sqrt{h_i}\normbig[0, \partial\Omega_i]{\frac{\partial v_{h,i}}{\partial n}} \frac{1}{\sqrt{h_i}}\norm[0, \partial\Omega_i]{v_{h,i}-v_{h,0}} + \frac{1}{\alpha h_i} \norm[0, \partial\Omega_i]{v_{h,i}-v_{h,0}}^2\bigg) \\
& \geq \sum_{i=1}^n \bigg(\norm[0,\Omega_i]{\nabla v_{h,i}}^2 - \delta h_i\normbig[0, \partial\Omega_i]{\frac{\partial v_{h,i}}{\partial n}}^2 - \frac{1}{\delta h_i}\norm[0, \partial\Omega_i]{v_{h,i}-v_{h,0}}^2 + \frac{1}{\alpha h_i} \norm[0, \partial\Omega_i]{v_{h,i}-v_{h,0}}^2\bigg) \\
& \geq \sum_{i=1}^n \bigg(\left(1-\delta C_I\right)\norm[0,\Omega_i]{\nabla v_{h,i}}^2 + \left(\frac{1}{\alpha}-\frac{1}{\delta}\right) \frac{1}{h_i}\norm[0, \partial\Omega_i]{v_{h,i}-v_{h,0}}^2\bigg) \\
& \geq C \norm[h]{v_h}^2,
\end{align*}
when $\delta \in (\alpha, C_I^{-1})$. 
\end{proof}

Building on this result, C\'ea's lemma is proved for the $h$ norm and spaces $W_h$ and $\widetilde W_h$.
\begin{lemma}[C\'ea's lemma]
\label{lemma:cea}
Fix $\epsilon >0.$ Let $u_h\in W_h$ and $\tilde u_h\in \widetilde W_h$ be the unique solution to \eqref{eq:rednitsche}. Then 
% there exists $C_{L}>0$, independent of $h$, such that
\begin{align*}
\norm[h]{u_h - \tilde u_h} \leq C\norm[h]{u_h - \tilde v_h} \quad \forall \tilde v_h\in \widetilde W_h.
\end{align*}
\end{lemma}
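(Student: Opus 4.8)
The plan is to follow the standard template for proving Céa's lemma in a conforming Galerkin setting, relying on the coercivity established in Lemma \ref{lemma:coercivity} together with a continuity (boundedness) property of the bilinear form $\Bh_h$ on the discrete spaces, and on Galerkin orthogonality. The crucial structural observation is that $\widetilde W_h \subset W_h$, so the reduced problem \eqref{eq:rednitsche} is a conforming Galerkin approximation of the full Nitsche problem \eqref{eq:nitsche}, and all the ingredients live in finite-dimensional spaces where the $h$-norm \eqref{eq:hnorm} is well defined.

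First I would record Galerkin orthogonality: since $\tilde u_h$ solves \eqref{eq:rednitsche} and $u_h$ solves \eqref{eq:nitsche}, subtracting the two equations and testing against any $\tilde v_h \in \widetilde W_h \subset W_h$ gives $\Bh_h(u_h - \tilde u_h, \tilde v_h) = 0$. Next, for an arbitrary $\tilde v_h \in \widetilde W_h$ I would split the error as $u_h - \tilde u_h = (u_h - \tilde v_h) + (\tilde v_h - \tilde u_h)$, and set $\tilde w_h := \tilde v_h - \tilde u_h \in \widetilde W_h$. Applying coercivity to $\tilde w_h$ and then inserting the decomposition, I obtain
\begin{align*}
C \norm[h]{\tilde w_h}^2 \leq \Bh_h(\tilde w_h, \tilde w_h) = \Bh_h(u_h - \tilde u_h, \tilde w_h) - \Bh_h(u_h - \tilde v_h, \tilde w_h).
\end{align*}
By Galerkin orthogonality the first term vanishes, leaving $C \norm[h]{\tilde w_h}^2 \leq -\Bh_h(u_h - \tilde v_h, \tilde w_h)$.

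The remaining ingredient is boundedness of $\Bh_h$ in the $h$-norm, i.e.\ an estimate of the form $|\Bh_h(v_h, w_h)| \leq M \norm[h]{v_h}\,\norm[h]{w_h}$ for all $v_h, w_h \in W_h$. This I would establish by a termwise Cauchy--Schwarz argument on the four terms of \eqref{eq:bilinf}, using the inverse trace inequality \eqref{eq:traceineq} to bound the normal-derivative factors $\sqrt{h_i}\,\normbig[0,\partial\Omega_i]{\partial v_{h,i}/\partial n_i}$ by $\sqrt{C_I}\,\norm[0,\Omega_i]{\nabla v_{h,i}}$, exactly as in the coercivity proof; the gradient and jump terms are controlled directly by the two pieces of the $h$-norm. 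With boundedness in hand, I apply it to $-\Bh_h(u_h - \tilde v_h, \tilde w_h) \leq M \norm[h]{u_h - \tilde v_h}\,\norm[h]{\tilde w_h}$, cancel one factor of $\norm[h]{\tilde w_h}$, and conclude $\norm[h]{\tilde w_h} \leq (M/C)\norm[h]{u_h - \tilde v_h}$. Finally the triangle inequality $\norm[h]{u_h - \tilde u_h} \leq \norm[h]{u_h - \tilde v_h} + \norm[h]{\tilde w_h}$ yields the claim with a possibly enlarged constant $C$, and since $\tilde v_h$ was arbitrary the bound holds for all $\tilde v_h \in \widetilde W_h$.

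The main obstacle I anticipate is the boundedness estimate, specifically verifying that $\Bh_h$ is continuous with respect to \emph{this particular} $h$-norm \eqref{eq:hnorm}, which measures the jump $v_{h,i} - v_{h,0}$ rather than $v_{h,i}$ alone. One must check that the mixed consistency terms $(\partial v_{h,i}/\partial n_i, w_{h,i} - w_{h,0})_{\partial\Omega_i}$ can indeed be bounded by the product of $h$-norms; this works because the normal-derivative factor is controlled by the gradient via \eqref{eq:traceineq} while the jump factor is precisely the boundary part of the $h$-norm. A secondary subtlety worth stating explicitly is the well-posedness (existence and uniqueness) of $\tilde u_h$, which follows from coercivity of $\Bh_h$ on the finite-dimensional subspace $\widetilde W_h \subset W_h$ via the Lax--Milgram theorem; the lemma statement already presupposes uniqueness, so this can be noted briefly.
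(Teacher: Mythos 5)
Your proposal is correct and follows essentially the same route as the paper: coercivity (Lemma \ref{lemma:coercivity}), Galerkin orthogonality from $\widetilde W_h\subset W_h$, and continuity of $\Bh_h$ in the $h$-norm. The only cosmetic difference is that the paper applies coercivity directly to $u_h-\tilde u_h$ and divides by $\norm[h]{u_h-\tilde u_h}$, whereas you apply it to $\tilde v_h-\tilde u_h$ and finish with the triangle inequality; your explicit sketch of the continuity bound via \eqref{eq:traceineq} fills in a step the paper leaves implicit.
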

\begin{proof}
Using Lemma \ref{lemma:coercivity}, Galerkin orthogonality and continuity of the bilinear form,
\begin{align*}
\norm[h]{u_h - \tilde u_h}^2 & \leq C\Bh_h(u_h - \tilde u_h, u_h - \tilde u_h) \\
					 & \leq C\Bh_h(u_h - \tilde u_h, u_h - \tilde v_h) \\
					 & \leq C\norm[h]{u_h - \tilde u_h}\norm[h]{u_h - \tilde v_h}.
\end{align*}
\end{proof}

The convergence of the (hybrid) Nitsche solution to the continuous solution is stated next. Here and in the following we denote by $u^C$ the solution to \eqref{eq:energy} and by $u\in W$ the corresponding function in $W$.
\begin{lemma}[Nitsche error in $h$ norm]
\label{lemma:nitschehnormerror}
Let $p\geq 1$ be the polynomial degree and \\$u^C\in H_0^1(\Omega)\cap H^{p+1}(\Omega)$ be the unique solution to \eqref{eq:energy}, $u=(u^C|_{\Omega_1},\ldots,u^C|_{\Omega_n},u^C|_\Gamma) \in W$ and $u_h\in W_h$ be the unique solution to \eqref{eq:nitsche}. Then 
% there exists $C_N > 0$, independent of $h$, such that
$$\norm[h]{u-u_h} \leq C h^p\norm[p+1]{u^C}.$$
\end{lemma}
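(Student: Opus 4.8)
The plan is to prove this convergence estimate via the standard Strang-type argument for nonconforming/consistent methods, combining the coercivity already established in Lemma \ref{lemma:coercivity} with an interpolation estimate and a consistency (Galerkin orthogonality) argument. First I would observe that the exact solution $u^C \in H^{p+1}(\Omega)$ is smooth enough that its normal derivatives on the subdomain boundaries $\partial\Omega_i$ are well-defined, and that $u=(u^C|_{\Omega_1},\ldots,u^C|_{\Omega_n},u^C|_\Gamma)$ satisfies $u_i=u_0$ on each $\partial\Omega_i$ and $\partial u_i/\partial n_i = \lambda_i$ in the sense of the strong form \eqref{eq:strongform}. Consequently $\Bh_h(u,v_h)=\Fh(v_h)$ for all $v_h\in W_h$: the interface jump terms $u_i-u_0$ vanish, and integrating $-\Delta u_i = f$ by parts reproduces exactly the gradient and normal-derivative terms of $\Bh_h$. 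This gives the crucial consistency identity $\Bh_h(u-u_h,v_h)=0$ for all $v_h\in W_h$.

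Next I would introduce the finite element interpolant $I_h u \in W_h$, built componentwise from the standard Lagrange interpolants $I_h(u^C|_{\Omega_i})\in V_{h,i}$ together with the trace interpolant in $V_{h,0}$. The interpolation theory for $P^p$ elements on a shape-regular mesh yields, for each subdomain, the volumetric bound $\norm[0,\Omega_i]{\nabla(u^C-I_h u^C)} \leq C h_i^p \norm[p+1,\Omega_i]{u^C}$ and, using a scaled trace inequality together with the fact that the interface traces of $u^C$ agree so that $u_i-u_0$ is interpolated consistently, the boundary bound $h_i^{-1/2}\norm[0,\partial\Omega_i]{(u^C-I_h u^C)} \leq C h_i^{p}\norm[p+1,\Omega_i]{u^C}$. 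Summing over $i$, using the quasi-uniformity assumption $ch\le h_i\le Ch$, gives $\norm[h]{u-I_h u}\leq C h^p \norm[p+1]{u^C}$. This is the approximation half of the argument and is essentially routine.

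The estimate then follows by coercivity. Writing $u_h-I_h u \in W_h$ and using Lemma \ref{lemma:coercivity} together with the consistency identity and continuity of $\Bh_h$ in the $h$ norm, I would bound
\begin{align*}
\norm[h]{u_h - I_h u}^2 &\leq C\,\Bh_h(u_h - I_h u, u_h - I_h u) \\
&= C\,\Bh_h(u - I_h u, u_h - I_h u) \\
&\leq C\,\norm[h]{u - I_h u}\,\norm[h]{u_h - I_h u},
\end{align*}
so that $\norm[h]{u_h - I_h u}\leq C\norm[h]{u-I_h u}$, and a triangle inequality $\norm[h]{u-u_h}\leq \norm[h]{u-I_h u}+\norm[h]{I_h u - u_h}$ closes the proof.

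\textbf{Main obstacle.} The delicate point is establishing that $\Bh_h$ is meaningful and that the consistency identity $\Bh_h(u,v_h)=\Fh(v_h)$ holds for the \emph{exact} solution $u$. The bilinear form \eqref{eq:bilinf} contains the term $(\partial v_{h,i}/\partial n_i,\, u_{h,i}-u_{h,0})_{\partial\Omega_i}$; to evaluate $\Bh_h(u,v_h)$ one needs the normal trace $\partial u^C/\partial n_i$ to be well-defined as an $L^2(\partial\Omega_i)$ function, which requires the $H^{p+1}$ regularity (in particular $p\ge 1$ ensures $u^C\in H^2$, so normal traces are controlled). Equally, the continuity of $\Bh_h$ in the $h$ norm is not completely automatic: bounding the consistency terms $(\partial v_{h,i}/\partial n_i,\cdot)_{\partial\Omega_i}$ against $\norm[h]{\cdot}$ requires the discrete trace inequality \eqref{eq:traceineq}, which applies to finite element functions but not directly to $u$; one therefore typically proves continuity on $W_h+\{u\}$ by augmenting the $h$ norm with an explicit $\sum_i h_i\norm[0,\partial\Omega_i]{\partial(\cdot)/\partial n_i}^2$ term and verifying that $u-I_hu$ is controlled in this augmented norm by interpolation. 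Managing these trace terms cleanly — rather than the volumetric interpolation, which is standard — is where the real care is needed.
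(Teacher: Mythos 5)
Your argument is sound, but be aware that the paper does not prove this lemma internally at all: its entire proof is the citation \cite[Theorem 3]{oikawa2010}, so there is no in-paper argument to compare against. What you have written is essentially the standard Strang-type proof that such a reference carries out --- consistency of the hybridized Nitsche form for the exact solution, componentwise interpolation with a scaled trace estimate, and coercivity plus continuity on $W_h+\{u-I_hu\}$ --- and it is the right route. Two points would need to be made explicit in a full write-up. First, in the consistency step the integration by parts does not simply ``reproduce'' the terms of $\Bh_h$: it leaves the residual $\sum_{i}(\partial u_i/\partial n_i,\, v_{h,0})_{\partial\Omega_i}$, which vanishes only because $u^C\in H^{p+1}(\Omega)$ with $p\ge 1$ has a single-valued normal flux across interior interfaces (the two sides cancel against the single-valued $v_{h,0}$) and because $v_{h,0}=0$ on $\partial\Omega$; this is where the hybrid structure actually enters. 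Second, you correctly flag that continuity of $\Bh_h$ in $\norm[h]{\cdot}$ fails for the nondiscrete argument and must be recovered in a norm augmented by $\sum_i h_i\norm[0,\partial\Omega_i]{\partial(\cdot)/\partial n_i}^2$, with the augmented norm of $u-I_hu$ controlled by elementwise trace and interpolation estimates --- this is precisely the device used in the cited reference. With those two steps spelled out, your proposal is a correct self-contained replacement for the citation.
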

\begin{proof}
See \cite[Theorem 3]{oikawa2010}.
\end{proof}

% \begin{align*}
% 	V_h^C = \Set{w\in H_0^1(\Omega)\given w|_T\in P^p(T)\:\forall T \in \Th_h}\subset H^1_0(\Omega)
% \end{align*}
Let $V_h^C = \Set{w\in H_0^1(\Omega)\given w|_T\in P^p(T)\:\forall T \in \Th_h} \subset H^1_0(\Omega)$. We then define the conforming finite element solution through the problem: find $u_h^C\in V_h^C$ such that
\begin{align}
\label{eq:conffem}
(\nabla u_h^C, \nabla v_h^C)_\Omega = (f,v_h^C)_\Omega \quad\forall v_h^C \in V_h^C.
\end{align}
The conforming solution $u_h^C$ approximates $u^C$ with an error bound $$\norm[1,\Omega]{u^C-u_h^C}\leq C h^p\norm[p+1]{u^C}$$ \cite{brennermathematical2008}
and is used to prove a reduction error estimate. 
In addition, we make a modest assumption on the overlapping subdomains. 
\begin{assumption}[Partition overlap]
\label{ass:overlap}
Let the triangulation/tetrahedralization $\Th_h$ be partitioned to $n$ overlapping local meshes $\Th_{h,i}^+$. Assume that any element $T\in\Th_h$ is included in at most $m \ll n$ local meshes. 
\end{assumption}
Assumption \ref{ass:overlap} is satisfied trivially when the mesh is large and the extension parameter $r$ is kept within reasonable limits, such as under half of the subdomain diameter. 

Notice, in particular, that as all finite element spaces are defined via the same mesh $\Th_h$ and have the same polynomial degree basis, $V_h^C|_{\partial\Omega_i^+}$ and $V_{h,i}^+|_{\partial\Omega_i^+}$ are, in fact, the same spaces. Consequently, Assumption \ref{ass:overlap} ensures that $$\sum_{i=1}^n\norm[1,\Omega_i^+]{u_h^C} \leq m \norm[1,\Omega]{u_h^C},$$ and the restriction of the conforming solution to $\Omega_i^+$ coincides with $w_{h,i}$ in Lemma \ref{lemma:localerror}. Further, the conforming solution can be written as $(u_h^C|_{\Omega_1},\dots, u_h^C|_{\Omega_n}, u_h^C|_\Gamma)$.

% In practice, Assumption \ref{ass:overlap} ensures  This helps us derive a bound between the conforming and the reduced finite element solutions.

We proceed to prove the finite element reduction error in the mesh-dependent norm by choosing a test function such that the reduction trace error vanishes.
\begin{lemma}[Reduction error in $h$ norm]
\label{lemma:reductionhnormerror}
Let $u_h^C \in V_h^C$ be the conforming finite element solution to \eqref{eq:conffem}. Suppose the mesh $\Th_h$ satisfies Assumption \ref{ass:overlap} and that $h\in (0,1]$. Let $\epsilon >0$ and denote 
\begin{align}
\label{eq:redtestfunction}
\tilde v_h = (\tilde v_{h,1}, \dots, \tilde v_{h,n}, u_h^C|_\Gamma) = (\widetilde \Zh_1(u_h^C|_{\partial\Omega_1^+}), \dots, \widetilde \Zh_n(u_h^C|_{\partial\Omega_n^+}), u_h^C|_\Gamma)\in \widetilde W_h.
\end{align} 
Then 
% there exists $C_R>0$, independent of $h$, such that 
$$\norm[h]{u_h^C - \tilde v_h} \leq C\epsilon m \norm[p+1]{u^C}.$$
\end{lemma}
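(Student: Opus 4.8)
The plan is to exploit that the trace component of $\tilde v_h$ is chosen equal to $u_h^C|_\Gamma$, so the difference $u_h^C-\tilde v_h$ has vanishing trace part. Writing the conforming solution as $u_h^C=(u_h^C|_{\Omega_1},\dots,u_h^C|_{\Omega_n},u_h^C|_\Gamma)$ and expanding \eqref{eq:hnorm}, the boundary term $v_{h,i}-v_{h,0}$ becomes $(u_h^C|_{\Omega_i}-\tilde v_{h,i})-(u_h^C|_\Gamma-u_h^C|_\Gamma)=u_h^C|_{\Omega_i}-\tilde v_{h,i}$, so the global norm decouples into the purely local norms \eqref{eq:hinorm},
\begin{align*}
\norm[h]{u_h^C-\tilde v_h}^2=\sum_{i=1}^n\norm[h,i]{u_h^C|_{\Omega_i}-\tilde v_{h,i}}^2.
\end{align*}
I read $\tilde v_{h,i}$ here as the reduced local solution $\widetilde w_{h,i}|_{\Omega_i}=w_{h,i}^f|_{\Omega_i}+\widetilde\Zh_i(u_h^C|_{\partial\Omega_i^+})\in\widetilde V_{h,i}$, since the load response $w_{h,i}^f$ lies in $\widetilde V_{h,i}$ by Definition \ref{def:reducedspace}; including it is exactly what reduces the local difference to the pure low-rank error.

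Next I would localize the conforming solution. Testing \eqref{eq:conffem} against functions of $V_h^C$ supported in a single $\Omega_i^+$ (legitimate because $\partial\Omega_i^+$ cuts no element) shows that $u_h^C|_{\Omega_i^+}$ is the finite element solution $w_{h,i}$ of the local problem \eqref{eq:localproblem} with boundary data $g_h=u_h^C|_{\partial\Omega_i^+}$, as already noted before the lemma. Hence $u_h^C|_{\Omega_i}=w_{h,i}|_{\Omega_i}$ and $\tilde v_{h,i}=\widetilde w_{h,i}|_{\Omega_i}$, and each summand is exactly the setting of Lemma \ref{lemma:localerror}:
\begin{align*}
\norm[h,i]{u_h^C|_{\Omega_i}-\tilde v_{h,i}}\leq\epsilon\norm[1,\Omega_i^+]{w_{h,i}}\leq\epsilon\norm[1,\Omega_i^+]{u_h^C},
\end{align*}
the last inequality because the minimizer $w_{h,i}$ realizing the trace norm \eqref{eq:tracenorm} has no larger $H^1(\Omega_i^+)$ norm than the competitor $u_h^C|_{\Omega_i^+}$, which carries the same trace $g_h$.

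Finally I would sum over $i$ and pass to global quantities. Using the local bound and the elementary inequality $\sum_i a_i^2\leq(\sum_i a_i)^2$ for non-negative $a_i$,
\begin{align*}
\norm[h]{u_h^C-\tilde v_h}^2\leq\epsilon^2\sum_{i=1}^n\norm[1,\Omega_i^+]{u_h^C}^2\leq\epsilon^2\Big(\sum_{i=1}^n\norm[1,\Omega_i^+]{u_h^C}\Big)^2\leq\epsilon^2 m^2\norm[1,\Omega]{u_h^C}^2,
\end{align*}
where the final step is the overlap bound $\sum_{i=1}^n\norm[1,\Omega_i^+]{u_h^C}\leq m\norm[1,\Omega]{u_h^C}$ furnished by Assumption \ref{ass:overlap}; this is where the factor $m$ enters. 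Taking square roots and then replacing $u_h^C$ by $u^C$ via the triangle inequality, the standard conforming estimate $\norm[1,\Omega]{u^C-u_h^C}\leq Ch^p\norm[p+1]{u^C}$, the hypothesis $h\in(0,1]$, and the embedding $H^{p+1}(\Omega)\hookrightarrow H^1(\Omega)$ give $\norm[1,\Omega]{u_h^C}\leq C\norm[p+1]{u^C}$, which yields the claim.

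The genuinely delicate points are the bookkeeping around the load term — ensuring the subdomain test components are the full reduced local solutions so that the local difference collapses to $(\Zh_i-\widetilde\Zh_i)(u_h^C|_{\partial\Omega_i^+})$ and is therefore $O(\epsilon)$ — and extracting precisely the power $m$ (rather than $\sqrt m$) from Assumption \ref{ass:overlap}; the remaining manipulations are a direct chaining of Lemma \ref{lemma:localerror}, the conforming error bound, and the embedding.
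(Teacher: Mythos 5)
Your argument is essentially the paper's own proof: decompose the $h$ norm into the local norms $\norm[h,i]{\cdot}$ (the trace contributions reduce to $\norm[0,\partial\Omega_i]{u_{h,i}^C-\tilde v_{h,i}}$ because the trace component of $\tilde v_h$ is $u_h^C|_\Gamma$), identify $u_h^C|_{\Omega_i^+}$ with the local solution $w_{h,i}$, apply Lemma \ref{lemma:localerror}, sum using Assumption \ref{ass:overlap}, and bound $\norm[1,\Omega]{u_h^C}$ by $C\norm[p+1]{u^C}$ via the conforming error estimate and $h\le 1$. Your extra care in reading $\tilde v_{h,i}$ as containing the load response $w_{h,i}^f|_{\Omega_i}$ (so that the local difference collapses to $(\Zh_i-\widetilde\Zh_i)(u_h^C|_{\partial\Omega_i^+})$) and in noting $\norm[1/2,\partial\Omega_i^+]{u_h^C|_{\partial\Omega_i^+}}\le\norm[1,\Omega_i^+]{u_h^C}$ by minimality is correct and makes explicit what the paper leaves implicit. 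One caveat on the summation step: the inequality $\sum_i\norm[1,\Omega_i^+]{u_h^C}\le m\norm[1,\Omega]{u_h^C}$ that you invoke (echoing the paper's remark preceding the lemma) does not actually follow from Assumption \ref{ass:overlap} — already for disjoint subdomains ($m=1$) the sum of $n$ local norms can be as large as $\sqrt{n}\,\norm[1,\Omega]{u_h^C}$; what the overlap assumption does give, by counting each element at most $m$ times, is $\sum_i\norm[1,\Omega_i^+]{u_h^C}^2\le m\norm[1,\Omega]{u_h^C}^2$, which is exactly what is needed here (and even improves the factor $m$ to $\sqrt m$), so your detour through $\sum_i a_i^2\le\left(\sum_i a_i\right)^2$ should be dropped in favour of this direct $\ell^2$ overlap count.
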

\begin{proof}
Using Lemma \ref{lemma:localerror} and Assumption \ref{ass:overlap},
\begin{align*}
\norm[h]{u_h^C - \tilde v_h}^2 & = \sum_{i=1}^n \norm[0, \Omega_i]{\nabla(u_{h,i}^C - \tilde v_{h,i})}^2 + \frac{1}{h_i}\norm[0, \partial\Omega_i]{u_{h,i}^C - \tilde v_{h,i}}^2 \\
					   & = \sum_{i=1}^n \norm[h,i]{u_{h,i}^C - \tilde v_{h,i}}^2 \\
					   % & = \sum_{i=1}^n \norm[h,i]{Z_i(u_h^C|_{\partial\Omega_i^+}) - \widetilde Z_i(u_h^C|_{\partial\Omega_i^+})}^2 \\
					   % & \leq \sum_{i=1}^n \norm[\Bh(1/2, h)]{Z_i - \widetilde Z_i}^2\norm[1/2, \partial\Omega_i^+]{u_h^C}^2 \\
						 & \leq \sum_{i=1}^n \epsilon^2 \norm[1, \Omega_i^+]{u_h^C}^2 \\
						 & \leq \epsilon^2 m^2 \norm[1, \Omega]{u_h^C}^2.
\end{align*}
The trace terms vanish as the test function equals the conforming finite element solution at $\partial\Omega_i, i=1,\dots,n$. Then, the norm of the conforming finite element solution can be bounded by
\begin{align*}
   \norm[1]{u_h^C} & = \norm[1]{u_h^C - u^C + u^C} \\
						  & \leq \norm[1]{u_h^C - u^C} + \norm[1]{u^C} \\
						  & \leq Ch^p\norm[p+1]{u^C} + \norm[p+1]{u^C} \\
						  & = C(h^p+1)\norm[p+1]{u^C}.
\end{align*}
Inserting this above yields 
\begin{align*}
   \norm[h]{u_h^C - \tilde v_h}^2 & \leq C\epsilon^2 m^2 (h^p+1)^2\norm[p+1]{u^C}^2 \leq C\epsilon^2 m^2\norm[p+1]{u^C}^2,
\end{align*}
as by assumption $h \leq 1.$ Taking a square root finishes the proof.
\end{proof}

Finally, we present our error estimate.
\begin{theorem}[Error estimate]
\label{thm:error}
Let $p\geq1$ be the polynomial degree and $\Th_h$ satisfy Assumption \ref{ass:overlap} with $h\in (0,1]$. Denote $u^C\in H^1_0(\Omega)\cap H^{p+1}(\Omega)$ the unique solution to \eqref{eq:energy},
$u=(u^C|_{\Omega_1},\ldots,u^C|_{\Omega_n},u^C|_\Gamma) \in W$,
%, $u_h^C \in V_h^C$ the conforming finite element solution, $u_h\in W_h$ the Nitsche solution, 
and $\tilde u_h\in \widetilde W_h$ the reduced Nitsche solution to \eqref{eq:rednitsche} with tolerance $\epsilon > 0$. Then 
% there exists $C>0$, independent of $h$, such that
\begin{align}
   \label{eq:errorestimate}
   \norm[h]{u-\tilde u_h} & \leq C(h^p + \epsilon m)\norm[p+1]{u^C}.
\end{align}
\end{theorem}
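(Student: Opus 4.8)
The plan is to split the total error by the triangle inequality into a Nitsche discretization error and a model-order-reduction error, and then to bound each piece with the lemmas already established. First I would write $\norm[h]{u - \tilde u_h} \leq \norm[h]{u - u_h} + \norm[h]{u_h - \tilde u_h}$, where $u_h \in W_h$ is the full (unreduced) hybrid Nitsche solution to \eqref{eq:nitsche}. The first summand is immediately controlled by Lemma \ref{lemma:nitschehnormerror}, which gives $\norm[h]{u - u_h} \leq C h^p \norm[p+1]{u^C}$ and contributes the optimal $h^p$ term.

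For the second summand I would apply Céa's lemma (Lemma \ref{lemma:cea}), which yields $\norm[h]{u_h - \tilde u_h} \leq C \norm[h]{u_h - \tilde v_h}$ for \emph{every} $\tilde v_h \in \widetilde W_h$. The natural choice is the reduced test function defined in \eqref{eq:redtestfunction}, built from the low-rank lifting operators $\widetilde\Zh_i$ applied to the traces of $u_h^C$, since this is precisely the element of $\widetilde W_h$ for which the reduction error is already known to be small.

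The key structural point — and what I expect to be the main obstacle — is that Céa's lemma measures the distance from $\tilde v_h$ to the \emph{Nitsche} solution $u_h$, whereas Lemma \ref{lemma:reductionhnormerror} bounds the distance from $\tilde v_h$ to the \emph{conforming} solution $u_h^C$ (embedded in $W_h$ as $(u_h^C|_{\Omega_1},\dots,u_h^C|_{\Omega_n},u_h^C|_\Gamma)$). These two discrete solutions do not coincide, so I would bridge them through the exact solution $u$ with a further triangle inequality, $\norm[h]{u_h - \tilde v_h} \leq \norm[h]{u_h - u} + \norm[h]{u - u_h^C} + \norm[h]{u_h^C - \tilde v_h}$. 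The first term is again Lemma \ref{lemma:nitschehnormerror} ($\leq C h^p \norm[p+1]{u^C}$) and the third is Lemma \ref{lemma:reductionhnormerror} ($\leq C \epsilon m \norm[p+1]{u^C}$). For the middle term I would observe that both $u$ and the embedded $u_h^C$ are globally continuous, so the interface jump contributions $v_{h,i} - v_{h,0}$ in the $h$-norm \eqref{eq:hnorm} cancel and only the broken gradient survives; hence $\norm[h]{u - u_h^C} = \norm[0,\Omega]{\nabla(u^C - u_h^C)} \leq \norm[1,\Omega]{u^C - u_h^C} \leq C h^p \norm[p+1]{u^C}$ by the standard conforming finite element estimate quoted after \eqref{eq:conffem}.

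Collecting the three bounds and using $h \leq 1$ to absorb the constants, the second summand is of order $(h^p + \epsilon m)\norm[p+1]{u^C}$, which together with the first summand yields \eqref{eq:errorestimate}. The only genuinely nonroutine observation is the cancellation of the trace/jump terms for continuous functions, which is what lets the conforming $H^1$ error bound control the $h$-norm gap between $u$ and $u_h^C$, and thereby reconcile the Nitsche-based Céa estimate with the conforming-based reduction estimate.
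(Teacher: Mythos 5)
Your proposal is correct and follows essentially the same route as the paper's proof: the same triangle-inequality decomposition through $u_h$, $u$, and $u_h^C$, the same choice of test function \eqref{eq:redtestfunction} in C\'ea's lemma, and the same key observation that continuity of $u^C$ and $u_h^C$ on $\Gamma$ kills the jump terms so that $\norm[h]{u-u_h^C}=\norm[0]{\nabla(u^C-u_h^C)}$. No gaps.
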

\begin{proof}
We utilize repeated summing of zero and triangle inequality, and Lemma \ref{lemma:cea} to get
\begin{align*} 
   \norm[h]{u-\tilde u_h} & \leq \norm[h]{u-u_h} + \norm[h]{u_h - \tilde u_h} \\
						  & \leq \norm[h]{u-u_h} + C\norm[h]{u_h - \tilde v_h} \\
						  & \leq \norm[h]{u-u_h} + C\norm[h]{u_h - u_h^C} + C\norm[h]{u_h^C - \tilde v_h} \\
						  & \leq \norm[h]{u-u_h} + C\norm[h]{u_h - u} + C\norm[h]{u - u_h^C} + C\norm[h]{u_h^C - \tilde v_h} \\
						  & \leq C\norm[h]{u-u_h} + C\norm[0]{\nabla (u^C - u_h^C)} + C\norm[h]{u_h^C - \tilde v},
\end{align*}
where $u_h\in W_h$ is the Nitsche solution, $u_h^C\in V_h^C$ is the conforming finite element solution with $\norm[h]{u-u_h^C} = \norm[0]{\nabla (u^C-u_h^C)}$ as $u^C$ and $u_h^C$ are continuous on $\Gamma$, and $\tilde v_h\in \widetilde W_h$ is the function \eqref{eq:redtestfunction}. 
% The constant $C_{L}$ was omitted for clarity. 

Then, using Lemmas \ref{lemma:nitschehnormerror} and \ref{lemma:reductionhnormerror} 
\begin{align*}
   \norm[h]{u-\tilde u_h} & \leq C\norm[h]{u-u_h} + C\norm[0]{\nabla (u^C - u_h^C)} + C\norm[h]{u_h^C - \tilde v} \\
						  & \leq Ch^p \norm[p+1]{u^C} + Ch^p\norm[p+1]{u^C} + C\epsilon m\norm[p+1]{u^C} \\
& \leq C(h^p + \epsilon m)\norm[p+1]{u^C}.
\end{align*}
\end{proof}

Theorem \ref{thm:error} provides an upper bound to the reduced solution that is dependent on the degree $p$ of the polynomial basis and the local dimension reduction tolerance $\epsilon$. Given a small enough tolerance $\epsilon$, the reduced approximations should practically coincide with the conventional FEM solution. Our numerical tests in Section \ref{sec:numerics} support this conclusion. 

\begin{remark}
The tolerance $\epsilon$ in \eqref{eq:zopapprox} provides an upper bound for the low-rank approximation, but leads to a very crude estimate in almost all practical cases. In the proof of Lemma~\ref{lemma:localerror}, when we bound the mesh-dependent norm in terms of the operator norm, we implicitly cover all possible boundary conditions on extended domains, including extremely pathological cases. Thus, for typical smoother loads the local reduced approximations can be several magnitudes more accurate.
	% bounds the mesh-dependent norm \eqref{eq:hinorm} of an extension boundary condition dependent on the load mapped to $\Omega_i$ with the operator norm, and hence the error bound covers all extension boundary conditions, including extremely pathological cases. Thus, for the typical smoother loads the local reduced approximations can be magnitudes more accurate.
\end{remark}

\section{Matrix implementation}
\label{sec:implementation}

Recall from Section \ref{sec:localmor} that $m_i = \dim(V_{h,i})$, and let $K = \dim(V_{h,0})$. The variational problem \eqref{eq:nitsche} can be written in matrix form as 
\begin{align}
	% \mat{\Bh(v_{h,1}^0\text{,}\,v_{h,1}^0), \bm{0}, \cdots, \Bh(v_{h,0}^0\text{,}\,v_{h,1}^0); \bm{0}, \Bh(v_{h,2}^0\text{,}\,v_{h,2}^0), \cdots, \Bh(v_{h,0}^0\text{,}\,v_{h,2}^0); \vdots, \vdots, \ddots, \vdots; \Bh(v_{h,1}^0\text{,}\,v_{h,0}^0), \Bh(v_{h,2}^0\text{,}\,v_{h,0}^0), \cdots, \Bh(v_{h,0}^0\text{,}\,v_{h,0}^0)} \vec{\bm{\beta}_1, \bm{\beta}_2, \vdots, \bm{\beta}_0}& = \vec{\Fh(v_{h,1}^0), \Fh(v_{h,2}^0), \vdots, \bm{0}},\\
	\label{eq:nitschemat}
		\underbrace{\mat{\bm{A}, \bm{B}; \bm{B}^T, \bm{C}}}_{\bm K} \vec{\bm{\beta}, \bm{\beta}_0}= \mat{\bm{A}_{1}, \bm{0}, \cdots, \bm{B}_{1}; \bm{0}, \bm{A}_{2}, \cdots, \bm{B}_2; \vdots, \vdots, \ddots, \vdots; \bm{B}_1^T, \bm{B}_2^T, \cdots, \bm{C}} \vec{\bm{\beta}_1, \bm{\beta}_2, \vdots, \bm{\beta}_0}& = \vec{\bm{f}_1, \bm{f}_2, \vdots, \bm{0}} = \vec{\bm{f}, \bm{0}},
\end{align}
where the matrices are of dimensions $\bm A_i\in \R^{m_i\times m_i}, \bm B_i\in \R^{m_i\times K}, i=1,\dots,n,$ $\bm C\in \R^{K\times K}$, and the vectors $\bm \beta_i, \bm f_i\in\R^{m_i}, i=1,\dots,n,$ and $\bm\beta_0\in\R^K$.
The elements of the different matrices are detailed in Appendix \ref{apx:matrixelements}.

The system \eqref{eq:nitschemat} can be solved in two steps:
\begin{equation}
	\begin{aligned}
\label{eq:nitschesystem}
		(\bm C - \bm B^T \bm A^{-1}\bm B)\bm \beta_0 & = -\bm B^T\bm A^{-1}\bm f, \\
		\bm \beta & = \bm A^{-1}(\bm f - \bm B\bm \beta_0).
	\end{aligned}
\end{equation}
Observe that the coefficient matrix $\bm K$ in \eqref{eq:nitschemat} is a matrix representation of $\Bh$ that is symmetric and coercive in the mesh-dependent norm and hence symmetric positive definite. Thus, the Schur complement matrix $\bm S = \bm C - \bm B^T\bm A^{-1}\bm B$ is also symmetric and positive definite. 
% since the matrices $\bm A_i, i = 1,\dots, n$ and $\bm C$ are positive definite and $\bm B_i,i=1,\dots,n$ have full rank, 
In particular, $\bm y^T \bm S \bm y = \bm x^T \bm K \bm x  > 0$ for any $\bm y\in\R^K$ with a corresponding $\bm x = \bm x(\bm y)$ as 
$$\bm y^T\bm S \bm y = 
\underbrace{\bm y^T \mat{\bm 0 \bm I}\mat{\bm I \bm 0; -\bm B^T\bm A^{-1} \bm I}}_{\bm x^T} \underbrace{\mat{\bm A \bm B; \bm B^T \bm C}}_{\bm K}\underbrace{\mat{\bm I \bm 0; -\bm B^T\bm A^{-1} \bm I}^T \mat{\bm 0;\bm I}\bm y}_{\bm x}.$$
Hence, we can use, e.g., the conjugate gradient method to solve for $\bm\beta_0$. Then, the value of $\bm\beta_0$ can be inserted to find $\bm\beta$.

The system \eqref{eq:nitschemat} can also be reduced per our local model order reduction scheme as described in Section \ref{sec:localmor}. For each subdomain, we can find a reduced basis such that $\bm Q_i\in \R^{m_i\times \widetilde{k_i}}, \widetilde{k_i}\ll m_i,$ and $\bm Q_i^T \bm A_{i}\bm Q_i = \bm\Lambda_i\in\R^{\widetilde{k_i}\times \widetilde{k_i}}$ is a diagonal matrix. Further, $\bm Q_i^T \bm B_i = \widetilde{\bm B_i}\in \R^{\widetilde{k_i}\times K}$ and $\bm Q_i\bm f_i = \widetilde{\bm f_i}\in\R^{\widetilde{k_i}}.$ Defining $\tilde k = \sum_{i=1}^n \tilde k_i$, the full reduced matrices are then $\widetilde{\bm B}\in\R^{K\times \tilde k}, \bm \Lambda\in\R^{\tilde k\times \tilde k}$ and $\tilde{\bm f}\in\R^{\tilde k}.$ We refer to our previous paper \cite[Section 6]{gustafsson2024} for more details on how to compute the reduced bases efficiently with randomized numerical linear algebra \cite{halkofinding2011,martinssonrandomized2020}.

\begin{figure}
	\begin{center}
		\includegraphics[width=\textwidth]{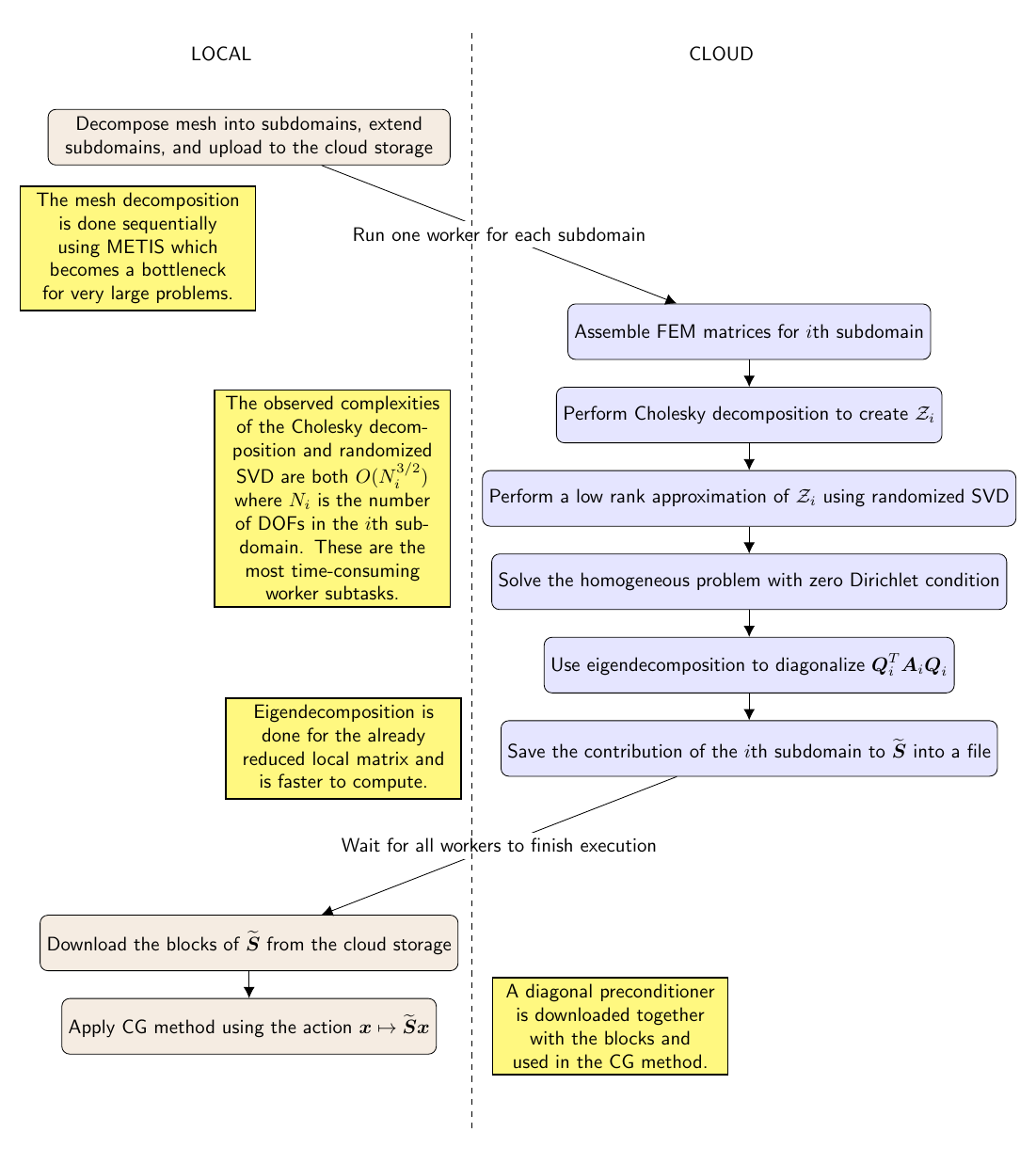}
	\end{center}
	\caption{The computational implementation on a high level and its separation between a local main node and distributed cloud worker nodes. For more details, see \cite[Section 6]{gustafsson2024} and our code \cite{sourcepackage}. The most expensive worker subtask complexities were analyzed empirically in \cite[Figure 6]{gustafsson2024}.}
	\label{fig:implementation}
\end{figure}

The solution \eqref{eq:nitschesystem} is then approximated by 
\begin{equation}
	\begin{aligned}
		\label{eq:nitschesystemred}
		(\bm C-\widetilde{\bm B}^T\bm\Lambda^{-1}\widetilde{\bm B})\bm\beta_0 & = -\widetilde{\bm B}^T\bm\Lambda^{-1}\widetilde{\bm f}, \\
		\tilde{\bm\beta} & = \bm\Lambda^{-1}(\widetilde{\bm f} - \widetilde{\bm B}\bm\beta_0).
	\end{aligned}
\end{equation}
We refer to \eqref{eq:nitschesystem} as the Nitsche system and to \eqref{eq:nitschesystemred} as the reduced Nitsche system. Note that the local basis reduction can be done independently for each subdomain. The global reduced matrices are simply formed block-wise on the main node afterwards, contrary to the involved projection scheme in \cite{gustafsson2024}. Finally, $\widetilde{\bm S} = \bm C - \widetilde{\bm B}^T\bm\Lambda^{-1}\widetilde{\bm B}$ is not constructed explicitly when using the preconditioned conjugate gradient method. Instead, the matrices $\bm C, \widetilde{\bm B}$ and $\bm \Lambda^{-1}$ are kept in memory which reduces both floating point operations and memory requirements as fewer nonzero elements are retained compared to computing $\widetilde{\bm S}$. The high-level workflow is described in Figure \ref{fig:implementation}.

\begin{remark}
	The most important enhancement from \eqref{eq:nitschesystem} to \eqref{eq:nitschesystemred} is reducing $\bm A^{-1}$ to $\bm \Lambda^{-1}$. $\bm A$ is a very large block-diagonal matrix that could be inverted locally block-by-block, but then the iterative solution methods would again require a supercomputer environment due to memory requirements. $\bm\Lambda$ is a much smaller diagonal matrix fitting into main node memory and analogous to the coarse problem in iterative substructuring methods \cite{farhat2001feti,dohrmann2003preconditioner}.
\end{remark}

\begin{remark}
    Compared to our previous work \cite{gustafsson2024}, the hybrid Nitsche scheme has clear implementational advantages. 
    The formulation of the local problems  is significantly tidier and the global projection to the reduced system is straightforward. Our previous method overlapped the subdomain DOFs with an extra layer during the local computations and the partition-of-unity-based approach grew requires specialized assembly. The hybrid Nitsche method enforces continuity with the trace variable in a simple and identical way for any polynomial basis.  Moreover, in \cite{gustafsson2024} the projection to the global reduced system had to be restricted to the intersections of the subdomain interfaces and parallelized to scale the method due to the cost of explicit matrix multiplications. This remained expensive and became the limiting factor for scaling, while for the hybrid Nitsche method the projection corresponds just to concatenating block matrices on the main node as all projections are done locally.
\end{remark}

\section{Numerical experiments}
\label{sec:numerics}

In this section, we confirm numerically the error estimate \eqref{eq:errorestimate} on the unit cube. Our code utilizes the \texttt{scikit-fem} package \cite{gustafssonscikitfem2020} and can be found from \cite{sourcepackage}. The method exhibits the expected polynomial convergence rate until it plateaus to the reduction error, which is much smaller than the theoretical local error for relatively smooth loads. Then, a large-scale computation with $20$ million degrees-of-freedom validates the scaling of the method, and gives rough requirements for the computational nodes. Finally, a model problem of a curved pipe showcases how the method performs with engineering geometries.

\subsection{Convergence tests}

Consider the problem \eqref{eq:energy} with load
\begin{align}
	\label{eq:testload}
	f = 2\sqrt{900}((1-x)x(1-y)y + (1-x)x(1-z)z + (1-y)y(1-z)z)
\end{align}
on $\Omega = [0,1]^3$. The energy norm of the analytical solution $u^C$ is equal to 1, and using Galerkin orthogonality the error in the mesh-dependent norm \eqref{eq:hnorm} reduces to  
\begin{equation}
    \begin{aligned}
    \label{eq:energynormerror}
        \norm[h]{u-\tilde u_h} & \leq C\left(\Bh_h(u-\tilde u_h, u-\tilde u_h)\right)^{1/2} \\
            & = C\left(\Bh_h(u,u) -\Bh_h(\tilde u_h, \tilde u_h)\right)^{1/2} \\
            %& = \left(\sum_{i=1}^n \norm[0,\Omega_i]{\nabla(u - \tilde u_{h,i})}^2 + \frac{1}{h_i}\norm[0,\partial\Omega_i]{-\tilde u_{h,i}+\tilde u_{h,0}}\right)^{1/2} \\ 
            & = C\left(\sum_{i=1}^n\norm[0,\Omega_i]{\nabla u}^2 - \norm[0,\Omega_i]{\nabla\tilde u_{h,i}}^2 + \frac{1}{h_i}\norm[0,\partial\Omega_i]{\tilde u_{h,0}-\tilde u_{h,i}}\right)^{1/2} \\
            & \approx \left(1 - \sum_{i=1}^n\norm[0,\Omega_i]{\nabla\tilde u_h}^2\right)^{1/2}.
    \end{aligned}
\end{equation} 
We have numerically observed that the error on the skeleton is comparable to noise when we use matching meshes and the same polynomial degree basis for all $u_{h,i}, i=1,\dots,n,$ and $u_{h,0}$. Thus, it is omitted from \eqref{eq:energynormerror}.

We approximate $u^C$ with $\tilde u_h$ by solving
\eqref{eq:rednitsche} via \eqref{eq:nitschesystemred}. Our implementation utilized the software package \texttt{scikit-fem} \cite{gustafssonscikitfem2020}.
Each subdomain was extended $r=4h$ using a $kd$-tree \cite{bentleymultidimensional1975}. 
We use $p=2$ and $\alpha=0.01$ to test the convergence with respect to mesh parameter $h$ and tolerance $\epsilon$. The number of subdomains was chosen such that each subdomain included roughly few thousand degrees-of-freedom and the extensions almost a magnitude more. The resulting reduced systems were solved using the conjugate gradient method with a simple diagonal preconditioner. The results are displayed in Figure \ref{fig:hepsilonplot} and Table \ref{tbl:convergence}.

\begin{table}[H]
\centering
	\caption{Figure \ref{fig:hepsilonplot} convergence test parameters and results on the unit cube $\Omega = [0,1]^3$ with linearly spaced discretizations. The columns $E_{\epsilon}$ showcase the errors \eqref{eq:energynormerror} for the three different tolerances $\Set{\expnumber{1}{-2}, \expnumber{1}{-3}, \expnumber{1}{-4}}$. The last columns present the number of conjugate gradient iterations for solving \eqref{eq:nitschesystemred} with a diagonal preconditioner and the condition number of the reduced Schur complement, respectively.}
\label{tbl:convergence}
\begin{tabular}{crccccccc}
\hline\noalign{\smallskip}
% \toprule
Case & $\dim(V)$ & $n$ & $h$ & $E_{\epsilon=\expnumber{1}{-2}}$ & $E_{\epsilon=\expnumber{1}{-3}}$ & $E_{\epsilon=\expnumber{1}{-4}}$ & Iter$_{CG}$ & $\kappa(\widetilde{\bm S})$\\
% \midrule
\noalign{\smallskip}\hline\noalign{\smallskip}
1 & \numprint{24389} & \numprint{10} & $\expnumber{1.2}{-1}$ & $\expnumber{7.7}{-3}$ & $\expnumber{7.7}{-3}$ &  $\expnumber{7.7}{-3}$ &  107 & $\expnumber{9.7}{+2}$ \\
2 & \numprint{91125} & \numprint{50}& $\expnumber{7.9}{-2}$ & $\expnumber{3.2}{-3}$ & $\expnumber{3.1}{-3}$  &  $\expnumber{3.1}{-3}$ & 194 & $\expnumber{3.0}{+3}$ \\
3 & \numprint{389017} & \numprint{200}& $\expnumber{4.8}{-2}$ & $\expnumber{1.4}{-3}$ & $\expnumber{1.2}{-3}$  &  $\expnumber{1.2}{-3}$ & 310 & $\expnumber{7.0}{+3}$ \\
4 & \numprint{1601613} & \numprint{700}& $\expnumber{3.0}{-2}$ & $\expnumber{9.3}{-3}$ & $\expnumber{4.5}{-4}$  &  $\expnumber{4.5}{-4}$ & 467 & $\expnumber{1.8}{+4}$ \\
5 & \numprint{7880599} & \numprint{2000}& $\expnumber{1.7}{-2}$ & $\expnumber{7.2}{-4}$ & $\expnumber{1.7}{-4}$  & $\expnumber{1.5}{-4}$ & 701 & - \\
% \botrule
\noalign{\smallskip}\hline
\end{tabular}
\end{table}

% \begin{figure}[H]
\begin{figure}
	\begin{center}
		\includegraphics[width=0.95\textwidth]{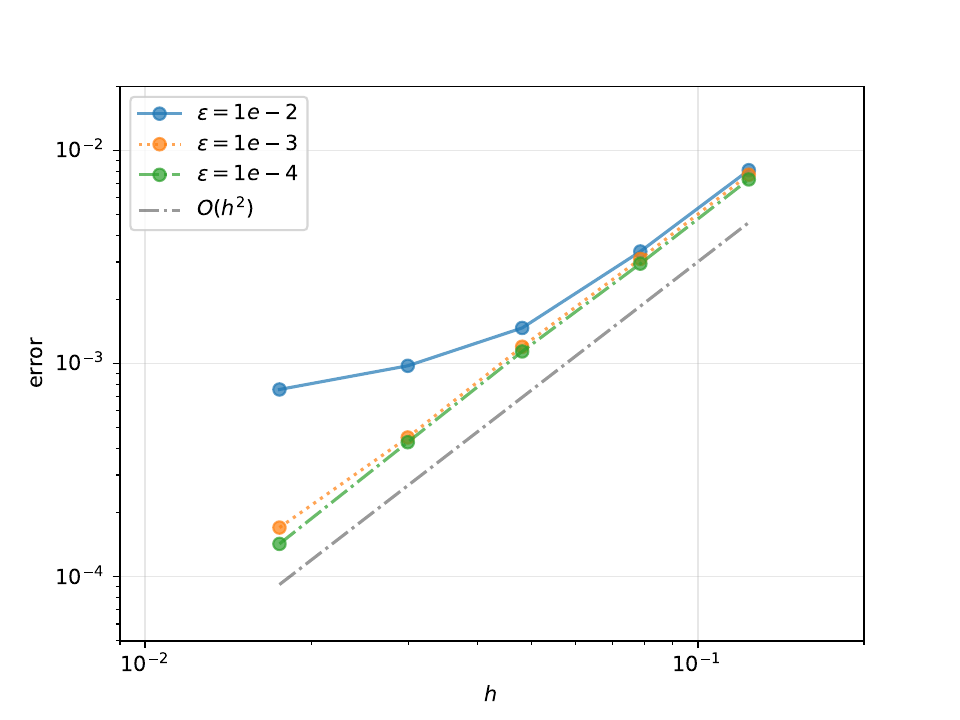}
	\end{center}
	\caption{Convergence of the method using second-degree polynomials on a log-log scale. On the $x$-axis is the mesh parameter $h$, and on the $y$-axis is the error \eqref{eq:energynormerror} of the approximation. The lines depict approximations $\tilde u_h$ with different reduction tolerance parameter $\epsilon$, and the gray line has the slope of the theoretical FEM convergence rate. The approximation error converges quadratically in $h$ as expected until the reduction error becomes the dominant factor for tolerance $\epsilon=\expnumber{1}{-2}$.}
	\label{fig:hepsilonplot}
\end{figure}

% \begin{table}[H]
% 	\caption{Figure \ref{fig:hepsilonplot} convergence test parameters and results on the unit cube $\Omega = [0,1]^3$ with linearly spaced discretizations. The last three columns present the errors \eqref{eq:energynormerror} for the three different tolerances $\Set{\expnumber{1}{-2}, \expnumber{1}{-3}, \expnumber{1}{-4}}$.}
% \label{tbl:convergence}
% % \centering
% \begin{tabular}{@{}crccccc@{}}
% \toprule
% Case & $\dim(V)$ & $n$ & $h$ & $E_{\epsilon=\expnumber{1}{-2}}$ & $E_{\epsilon=\expnumber{1}{-3}}$ & $E_{\epsilon=\expnumber{1}{-4}}$\\
% \midrule
% 1 & \numprint{24389} & \numprint{10} & $\expnumber{1.2}{-1}$ & $\expnumber{7.7}{-3}$ & $\expnumber{7.7}{-3}$ &  $\expnumber{7.7}{-3}$   \\
% 2 & \numprint{91125} & \numprint{50}& $\expnumber{7.9}{-2}$ & $\expnumber{3.2}{-3}$ & $\expnumber{3.1}{-3}$  &  $\expnumber{3.1}{-3}$  \\
% 3 & \numprint{389017} & \numprint{200}& $\expnumber{4.8}{-2}$ & $\expnumber{1.4}{-3}$ & $\expnumber{1.2}{-3}$  &  $\expnumber{1.2}{-3}$  \\
% 4 & \numprint{1601613} & \numprint{700}& $\expnumber{3.0}{-2}$ & $\expnumber{9.3}{-3}$ & $\expnumber{4.5}{-4}$  &  $\expnumber{4.5}{-4}$  \\
% 5 & \numprint{7880599} & \numprint{2000}& $\expnumber{1.7}{-2}$ & $\expnumber{7.2}{-4}$ & $\expnumber{1.7}{-4}$  & $\expnumber{1.5}{-4}$   \\
% \botrule
% \end{tabular}
% \end{table}
% \vspace{.5em}

\begin{remark}
    Table \ref{tbl:convergence} shows the iterations using the conjugate gradient method with just a simple diagonal preconditioner.  The number of $CG$ iterations is large, hence the development and analysis of specialized preconditioners for the reduced Schur complement system is necessary and will be a topic of future work.
\end{remark}

The convergence follows the theoretical estimate for FEM with second-degree polynomial basis except for the larger tolerance $\epsilon=\expnumber{1}{-2}$ when the mesh parameter $h$ decreases enough. Then the reduction error term $\epsilon m\norm[p+1]{u^C}$ in Theorem \ref{thm:error} starts to dominate. The reduction error, i.e. the difference between the conforming finite element solution $u_h^C$ and the reduced Nitsche solution $\tilde u_h$, can be, similarly to \eqref{eq:energynormerror}, reduced to 
\begin{align}
	\label{eq:reductionerror}
	\norm[h]{u_h^C - \tilde u_h} \approx \left(\sum_{i=1}^n\left|\norm[0,\Omega_i]{\nabla u_h^C}^2 - \norm[0,\Omega_i]{\nabla \tilde u_h}^2\right|\right)^{1/2}.
\end{align} 
Figure \ref{fig:rederrorplot} exhibits the errors \eqref{eq:energynormerror} and the reduction errors \eqref{eq:reductionerror} for the approximations in Figure \ref{fig:hepsilonplot} and Table \ref{tbl:convergence}.

\vspace{-2em}
\begin{figure}[H]
% \begin{figure}
	\begin{center}
		\includegraphics[width=0.95\textwidth]{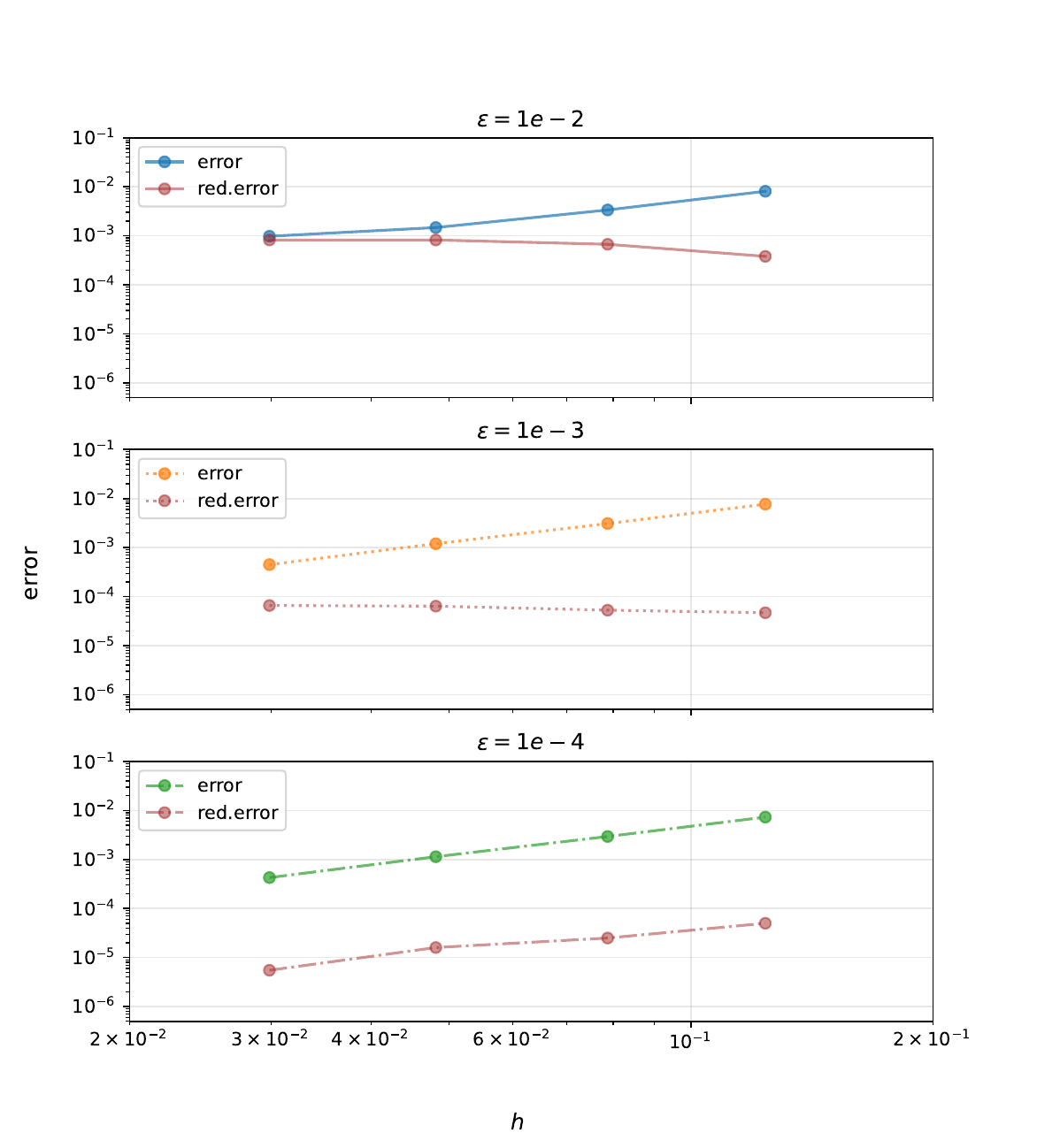}
	\end{center}
	\caption{Errors and reduction errors  for the approximations $\tilde u_h$ in Figure \ref{fig:hepsilonplot} with different tolerances $\epsilon$ on a log-log scale. Each subplot displays the error \eqref{eq:energynormerror} (varying color line) of the respective $\tilde u_h$ and the reduction error \eqref{eq:reductionerror} (dark red line). For the smaller tolerances the reduction error does not affect the approximation, but for $\epsilon=\expnumber{1}{-2}$ it is larger than the conventional FEM error for smaller $h$ and becomes the dominating factor.}\label{fig:rederrorplot}
\end{figure}

The reduction errors are relatively stable, but for $\epsilon=\expnumber{1}{-2}$ it is clear that already for $h\approx \expnumber{5}{-2}$ FEM is so accurate that the tolerance is too large and the basis reduction discards too much information. For load \eqref{eq:testload}, the reduction error seems to be between one and two magnitudes smaller than the tolerance $\epsilon$.

The tolerance $\epsilon$ determines the degree of dimension reduction, as it is the cutoff point to include only singular vectors of weighted $\bm Z_i$ that have singular values greater than $\epsilon$. Figure \ref{fig:zspectrum} presents the singular values of weighted $\bm Z_i$ given different extension parameters $r$ with otherwise the same load and parameterization. The subdomains were extended by a multiple of $h$, i.e. $r = ah, a\in\N$. The number of singular vectors in the reduced basis for different extensions $r$ and tolerance $\epsilon$ are depicted in Table \ref{tbl:localcutoffs}.

\vspace{-0.5em}
\begin{table}[h]
\centering
\caption{Number of singular vectors $k_i$ in the local reduced basis for different extensions $r$ and tolerance $\epsilon$. The original subdomain in question had $m_i=\numprint{3045}$ DOFs. The complete spectra are presented in Figure \ref{fig:zspectrum}.} 
\label{tbl:localcutoffs}
% \begin{tabular}{@{}lllcr@{}}
\begin{tabular}{lllcr}
\hline\noalign{\smallskip}
% \toprule
$\epsilon$ & $2h$ & $3h$ & $4h$ \\
% \midrule
\noalign{\smallskip}\hline\noalign{\smallskip}
$\expnumber{1}{-2}$ & 161 & 58 & 29 \\
$\expnumber{1}{-3}$ & 331 & 128 & 62 \\
$\expnumber{1}{-4}$ & 527 & 232 & 106 \\
% \botrule
\noalign{\smallskip}\hline
\end{tabular}
\end{table}

\vspace{-4em}

\begin{figure}[H]
% \begin{figure}
	\begin{center}
		\includegraphics[width=0.9\textwidth]{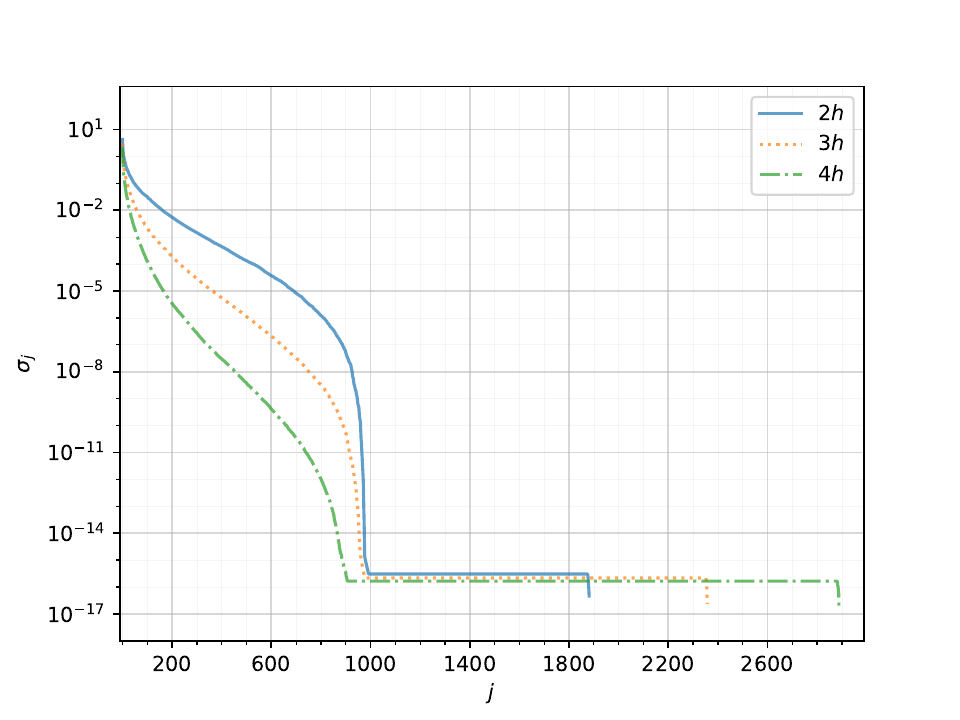}
	\end{center}
	\caption{The sorted singular values of weighted $\bm Z_i$ with three different subdomain extension parameters $r$ plotted on a logarithmic $y$-axis using a second-degree polynomial basis. The original subdomain had $\numprint{3045}$ DOFs and the extensions ranged from $\numprint{12159}-\numprint{29069}$ DOFs. The subdomain diameter is roughly doubled for $r=4h$. Larger extensions produce faster spectral decay as high frequency modes diminish faster. Number of singular vectors for different extensions and tolerances are presented in Table \ref{tbl:localcutoffs}.    
    }\label{fig:zspectrum}
\end{figure}

It is clear that only a minor subset of the extension boundary degrees-of-freedom are needed to ensure a good local estimate. For $r=4h$ and $\epsilon=\expnumber{1}{-3}$, only some dozens of singular vectors suffice, a two magnitude reduction compared to the original degrees-of-freedom $\numprint{3045}$ even in this challenging cube case. The fewer degrees-of-freedom required, the better the reduction and hence smaller matrices in \eqref{eq:nitschesystemred}. 

Increasing the extension parameter $r$ leads to smaller reduced bases, but at an increasing computational cost to the local problems. 
% Further, comparing to our first-degree polynomial tests in \cite{gustafsson2024}, 
Further, the decay of singular values is faster when raising the polynomial order from $p=1$ to $p=2$ for fixed $r$, compare to results for $p=1$ and same $r$ in \cite[Figure C.9]{gustafsson2024} . This is to be expected as there are more degrees-of-freedom per element for higher polynomial finite element bases, hence, better approximations relative to the degrees-of-freedom on the extension boundary.

It can be useful to have as few subdomains as possible to reduce the interface dimensionality -- and thus the size of the Schur complement system -- to the degree local resources have enough memory. This can result in faster convergence with the conjugate gradient method. However, if the extension parameter is untouched, the extension decreases in relation to subdomain diameter because the subdomains are now larger. This results in less reduction and counters the decrease in number of nonzero elements due to fewer local reduced bases. 
%When the extension grows in relation to the subdomain diameter, more singular functions from the extension boundary must be preserved to retain a fixed accuracy, an effect that can slightly outweigh having fewer subdomains. 
These dynamics are presented in Table \ref{tbl:subdomains}, which presents varying the number of subdomains for case 4 in Table \ref{tbl:convergence}. 

\begin{table}[h]
\centering
\caption{Decreasing the number of subdomains (within a reasonable range) decreases the dimensionality of the Schur complement system but increases the number of nonzeroes, and vice versa. Results for case $4$ in Table \ref{tbl:convergence} when varying the number of subdomains $n$. Even when the extension becomes smaller compared to subdomain diameter when $n$ decreases, the number of nonzeroes remains practically unchanged as there are fewer local reduced bases.} 
\label{tbl:subdomains}
\begin{tabular}{ccccccccc}
\hline\noalign{\smallskip}
% \toprule
$\dim(V)$ & $n$ & $\dim(\widetilde{\bm S})$ & $\dim(\bm \Lambda)$ & nnz($\widetilde{\bm S})$ & Iter$_{CG}$ & $\kappa(\widetilde{\bm S})$\\
% \midrule
\noalign{\smallskip}\hline\noalign{\smallskip}
\numprint{1601613} & \numprint{700}& \numprint{316804} &\numprint{52717} & \numprint{57757585} & 467 & $\expnumber{1.8}{+4}$ \\
\numprint{1601613} & \numprint{350}& \numprint{249742} & \numprint{34789} & \numprint{58171388} & 441 & $\expnumber{1.5}{+4}$ \\
\numprint{1601613} & \numprint{175}& \numprint{182390} & \numprint{21190} & \numprint{58542015} & 392 & $\expnumber{1.2}{+4}$ \\
% \botrule
\noalign{\smallskip}\hline
\end{tabular}
\end{table}

\subsection{Larger example}
\label{sec:scaling}
Cube is a very demanding object for the method. The geometry produces large
interfaces between subdomains for any partition and thus larger optimal reduced
bases. For a more challenging problem, we had a unit cube discretized to
$\numprint{21717639}$ degrees-of-freedom and partitioned into $\numprint{6000}$ subdomains using METIS \cite{karypismetis1997}. The load \eqref{eq:testload} was
solved with the parameters $p=2, \epsilon = \expnumber{1}{-5}, r=4h$ and $\alpha=0.01$.
Mesh preprocessing and solving \eqref{eq:nitschesystem} was done on a laptop main node. The local reduced bases were formed in Google Cloud using machine type \texttt{c2-standard-4} and image \texttt{debian-11-bullseye-v20230411} at spot prices for lower costs. The computational nodes are detailed in Table \ref{tbl:compenv}.

\begin{table}
\centering
\caption{Computational environment for the scaling test.} 
\label{tbl:compenv}
% \begin{tabular}{@{}lllcr@{}}
\begin{tabular}{lllcr}
\hline\noalign{\smallskip}
% \toprule
Node & OS & CPU & Threads & RAM \\
% \midrule
\noalign{\smallskip}\hline\noalign{\smallskip}
Main & Ubuntu 22.04 LTS & Intel Core i5-1335U & 12 & 32GB \\
Worker & Debian Bookworm 12.7 & Intel Xeon Gold 6254 & 4 & 6GB  \\
% \botrule
\noalign{\smallskip}\hline
\end{tabular}
\end{table}

The reduced system was solved using the conjugate
gradient method with a diagonal preconditioner. The diagonal preconditioner can be given as a sum of local components, which were created on the worker nodes to avoid the explicit construction of the system. The original over 20 million degrees-of-freedom were reduced to $\tilde k = \dim(\bm \Lambda) = \numprint{1362828}$ degrees-of-freedom and the Schur complement system was $K= \dim(\widetilde{\bm S}) = \numprint{4038252}$-dimensional. The error \eqref{eq:energynormerror} was $\expnumber{7.8}{-5}$, where the mean error for the $\numprint{6000}$ subdomains was $\expnumber{1.0}{-6}$ and maximum $\expnumber{2.3}{-6}$. The error follows the theoretical convergence rate of FEM for second-degree polynomials.

With these resources, the method was memory bound by the main node with 32GB of RAM, and the creation of the reduced Schur complement system \eqref{eq:nitschesystemred} utilized swap memory momentarily. Hence, using, e.g., a large memory main node from the cloud, the method could be straightforwardly scaled further. Moreover, the method is most applicable to complex geometries that admit partitions with small subdomain interfaces. These result in smaller Schur complement systems and more reduction. This loosens requirements for the computing environment relative to degrees-of-freedom of the original system and thus allows for solving significantly larger problems than e.g. the 20 million degrees-of-freedom cube presented here.

\subsection{Engineering model problem}
\label{sec:engineering}

\begin{figure}[H]
	\begin{center}
		\includegraphics[width=0.45\textwidth]{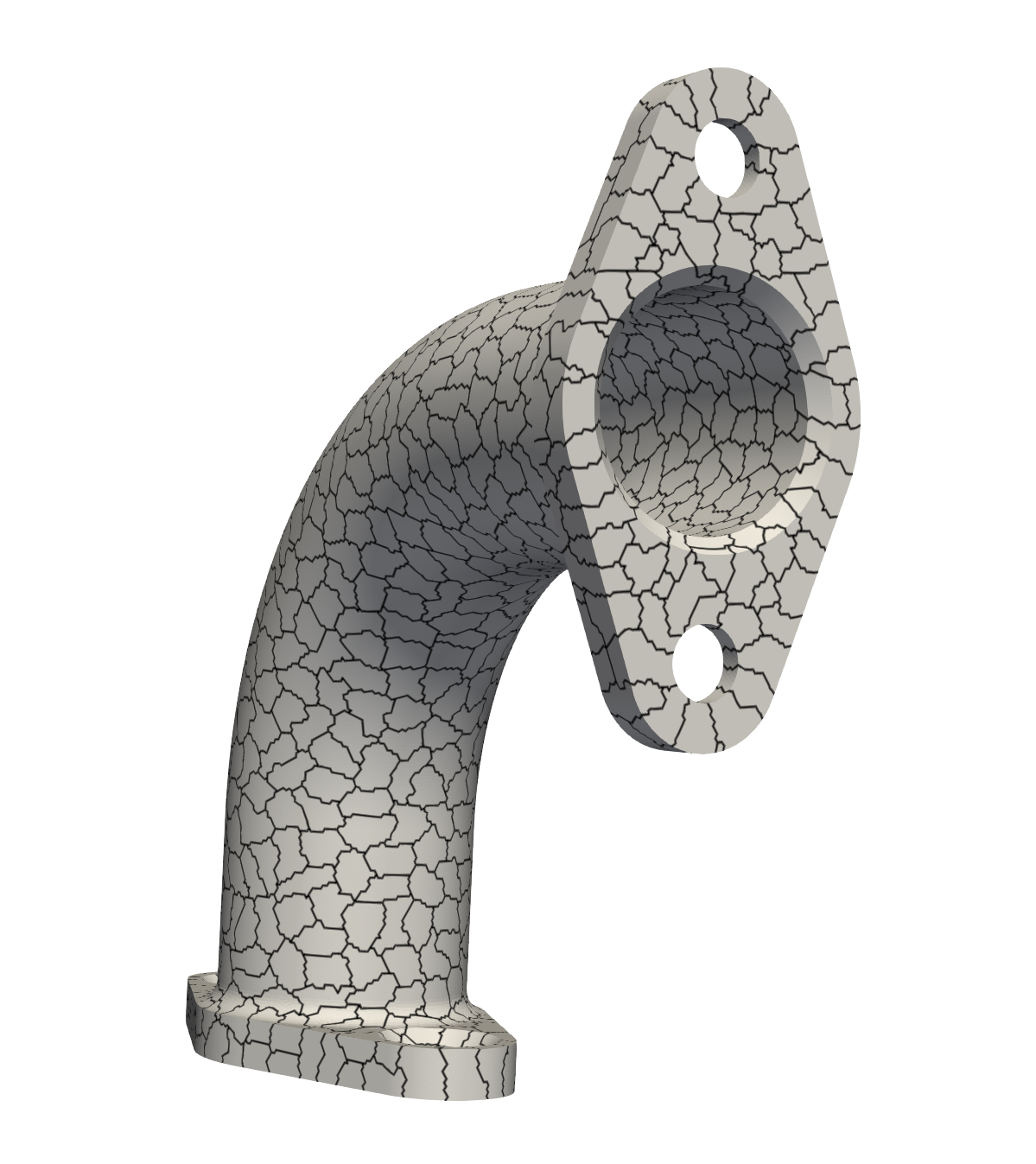}
	\end{center}
	\caption{The pipe geometry discretized into $\numprint{345821}$ nodes and 800 subdomains.}\label{fig:pipe}
\end{figure}
As a more practical example, we considered a curved pipe geometry from \cite{ledouxmamboprojectnodate}, see Figure \ref{fig:pipe}. This kind of a 2.5-dimensional problem is more suitable to our methodology, as the dimension reduction is dependent on the size of the subdomain interfaces. Given the shallow depth of the pipe, it can be partitioned into subdomains with relatively small subdomain interfaces, and hence, lower dimensional trace spaces. This allows for a substantial reduction in the number of degrees-of-freedom. 

We used the load $f=1$ and discretized the pipe into $\numprint{345821}$ nodes. With $p=2$, the original system had $\numprint{2550753}$ degrees-of-freedom. The other parameters were $\epsilon=\expnumber{1}{-4}, n=800, r=4h$ and $\alpha=0.01$. The original system was reduced to $\numprint{29363}$ degrees-of-freedom, a $98.8\%$ reduction, while the trace variable and hence the system \eqref{eq:nitschesystemred} were $\numprint{271795}$-dimensional. The relative reduction error compared to the conforming finite element solution was $\expnumber{7.4}{-4}$.

\section{Conclusions}
\label{sec:conclusions}

% We established a connection between energy minimization problem, its domain decomposition formulation with Lagrange multipliers, and FEM approximation of the variational formulation with hybrid Nitsche. Further, the hybrid Nitsche scheme was modified with local model order reduction. 
Considering a simple model problem, we presented its domain decomposition formulation with Lagrange multipliers and the corresponding FEM approximation based on a hybrid Nitsche formulation of arbitrary polynomial degree. Next, we modified the hybrid Nitsche formulation using the local model order reduction scheme introduced in \cite{gustafsson2024}.
We proved polynomial convergence with respect to the mesh parameter and linear convergence with respect to user-specified local error tolerance $\epsilon$. 
This improves upon the existing first-order estimate \cite{gustafsson2024}. Finally, we presented matrix implementation details and validated the theoretical results with numerical tests. The methodology shows promise for large-scale computing especially for challenging geometries, but development of a specialized preconditioner for the resulting Schur complement system is necessary.

\clearpage

\appendix

\section{Hybrid Nitsche matrix form}
\label{apx:matrixelements}

Let $\Set{\varphi_j^i}_{j=1}^{m_i}$ be a basis for $V_{h,i}$ and $\Set{\xi_j}_{j=1}^K$ be a piecewise basis for $V_{h,0}$. The matrices $\bm A_i\in \R^{m_i\times m_i}, \bm B_i\in \R^{m_i\times K}, i=1,\dots,n$ and $\bm C\in \R^{K\times K}$ have elements:
\begin{align*}
	(\bm A_i)_{jk} & = \int_{\Omega_i} \nabla \varphi_j^i \cdot \nabla \varphi_k^i\, dx - \int_{\partial\Omega_i} \frac{\partial \varphi_j^i}{\partial n} \varphi_k^i + \varphi_j^i\frac{\partial\varphi_k^i}{\partial n} - \frac{1}{\alpha h_i}\varphi_j^i \varphi_j^k\, ds, \\
	(\bm B_i)_{jk} & = \int_{\partial\Omega_i} \frac{\partial\varphi_j^i}{\partial n}\xi_k - \frac{1}{\alpha h_i} \varphi_j^i \xi_k\, ds, \\
	\label{eq:celem}
	\bm C_{kl} & = \int_{\Gamma} \frac{1}{\alpha h_i}\xi_k\xi_l + \frac{1}{\alpha h_j} \xi_l\xi_k\, ds, \quad (\mathrm{supp}(\xi_k)\cap\mathrm{supp}(\xi_l))\subset (\partial\Omega_i\cap\partial\Omega_j), \\
	(\bm f_i)_j & = \int_{\Omega_i} f\varphi_j^i\, dx.
\end{align*}
Further, the solution can be written as follows $u_{h} = [\bm\beta_1\cdot \bm\varphi^1, \dots, \bm\beta_n\cdot \bm\varphi^n, \bm\beta_0\cdot \bm\xi].$

\section*{Competing interests and funding}

This work was supported by the Research Council of Finland (Flagship of Advanced Mathematics for Sensing Imaging and Modelling grant 359181) and the Portuguese government through FCT (Funda\c c\~ao para a Ci\^encia e a Tecnologia), I.P., under the project UIDB/04459/2025.

\bibliography{references}

@article{farhat2001feti,
  title={{FETI-DP}: a dual--primal unified {FETI} method—part {I}: A faster alternative to the two-level {FETI} method},
  author={Farhat, Charbel and Lesoinne, Michel and LeTallec, Patrick and Pierson, Kendall and Rixen, Daniel},
  journal={International Journal for Numerical Methods in Engineering},
  volume={50},
  number={7},
  pages={1523--1544},
  year={2001},
  publisher={Wiley Online Library}
}

@article{dohrmann2003preconditioner,
  title={A preconditioner for substructuring based on constrained energy minimization},
  author={Dohrmann, Clark R},
  journal={SIAM Journal on Scientific Computing},
  volume={25},
  number={1},
  pages={246--258},
  year={2003},
  publisher={SIAM}
}

@article{egger2009class,
  title={A class of hybrid mortar finite element methods for interface problems with non-matching meshes},
  author={Egger, Herbert},
  journal={preprint AICES-2009-2, Jan},
  year={2009}
}

@article{egger2012hybrid,
  title={A hybrid mortar method for incompressible flow},
  author={Egger, Herbert and Waluga, Christian},
  journal={International Journal of Numerical Analysis and Modeling},
  volume={9},
  number={4},
  pages={793--812},
  year={2012}
}

@inproceedings{baiocchi1992stabilization,
  title={Stabilization of {G}alerkin methods and applications to domain decomposition},
  author={Baiocchi, C and Brezzi, F and Marini, L Donatella},
  booktitle={Conference Organized by INRIA, France},
  pages={343--355},
  year={1992},
  organization={Springer}
}

@article{oikawa2010,
  title={Discontinuous {G}alerkin {FEM} of hybrid type},
  author={Issei Oikawa and Fumio Kikuchi},
  journal={JSIAM Letters},
  volume={2},
  number={ },
  pages={49-52},
  year={2010},
  doi={10.14495/jsiaml.2.49}
}

@article{gustafsson2019error,
  title={Error analysis of {N}itsche’s mortar method},
  author={Gustafsson, Tom and Stenberg, Rolf and Videman, Juha},
  journal={Numerische Mathematik},
  volume={142},
  pages={973--994},
  year={2019},
  publisher={Springer}
}

@book{boffi2013mixed,
  title={Mixed finite element methods and applications},
  author={Boffi, Daniele and Brezzi, Franco and Fortin, Michel},
  volume={44},
  year={2013},
  publisher={Springer}
}

@misc{gustafsson2024,
      title={Distributed finite element solution using model order reduction}, 
      author={Tom Gustafsson and Antti Hannukainen and Vili Kohonen},
      year={2024},
      eprint={2404.06260},
      archivePrefix={arXiv},
      primaryClass={math.NA},
      url={https://arxiv.org/abs/2404.06260}, 
      note={https://arxiv.org/abs/2404.06260, conditionally accepted to \textit{Computers \& Mathematics with Applications}.}
}

@article{burman2019hybridized,
  title={Hybridized {CutFEM} for elliptic interface problems},
  author={Burman, Erik and Elfverson, Daniel and Hansbo, Peter and Larson, Mats G and Larsson, Karl},
  journal={SIAM Journal on Scientific Computing},
  volume={41},
  number={5},
  pages={A3354--A3380},
  year={2019},
  publisher={SIAM}
}

@article{hansbo2022nitsche, title={Nitsche’s finite element method for model coupling in elasticity}, author={Hansbo, Peter and Larson, Mats G}, journal={Computer Methods in Applied Mechanics and Engineering}, volume={392}, pages={114707}, year={2022}, publisher={Elsevier} }

@article{barbosa1991finite,
  title={The finite element method with {L}agrange multipliers on the boundary: circumventing the {B}abu{\v{s}}ka-{B}rezzi condition},
  author={Barbosa, Helio JC and Hughes, Thomas JR},
  journal={Computer Methods in Applied Mechanics and Engineering},
  volume={85},
  number={1},
  pages={109--128},
  year={1991},
  publisher={Elsevier}
}

@article{stenberg1995some,
  title={On some techniques for approximating boundary conditions in the finite element method},
  author={Stenberg, Rolf},
  journal={Journal of Computational and applied Mathematics},
  volume={63},
  number={1-3},
  pages={139--148},
  year={1995},
  publisher={Elsevier}
}

@article{becker2003finite,
  title={A finite element method for domain decomposition with non-matching grids},
  author={Becker, Roland and Hansbo, Peter and Stenberg, Rolf},
  journal={ESAIM: Mathematical Modelling and Numerical Analysis},
  volume={37},
  number={2},
  pages={209--225},
  year={2003},
  publisher={EDP Sciences}
}

@misc{ledouxmamboprojectnodate,
	title = {{MAMBO}-project: {Model} database mesh blocking},
	url = {https://gitlab.com/franck.ledoux/mambo},
	urldate = {2025-01-10},
	author = {Ledoux, Franck},
    note  = {https://gitlab.com/franck.ledoux/mambo}
}

@article{bentleymultidimensional1975,
	title = {Multidimensional binary search trees used for associative searching},
	volume = {18},
	issn = {0001-0782},
	doi = {10.1145/361002.361007},
	abstract = {This paper develops the multidimensional binary search tree (or k-d tree, where k is the dimensionality of the search space) as a data structure for storage of information to be retrieved by associative searches. The k-d tree is defined and examples are given. It is shown to be quite efficient in its storage requirements. A significant advantage of this structure is that a single data structure can handle many types of queries very efficiently. Various utility algorithms are developed; their proven average running times in an n record file are: insertion, O(log n); deletion of the root, O(n(k-1)/k); deletion of a random node, O(log n); and optimization (guarantees logarithmic performance of searches), O(n log n). Search algorithms are given for partial match queries with t keys specified [proven maximum running time of O(n(k-t)/k)] and for nearest neighbor queries [empirically observed average running time of O(log n).] These performances far surpass the best currently known algorithms for these tasks. An algorithm is presented to handle any general intersection query. The main focus of this paper is theoretical. It is felt, however, that k-d trees could be quite useful in many applications, and examples of potential uses are given.},
	number = {9},
	journal = {Communications of the ACM},
	author = {Bentley, Jon Louis},
	month = sep,
	year = {1975},
	keywords = {associative retrieval, attribute, binary search trees, binary tree insertion, information retrieval system, intersection queries, key, nearest neighbor queries, partial match queries},
	pages = {509--517},
}

@book{tosellidomain2005,
	address = {Berlin, Heidelberg},
	series = {Springer {Series} in {Computational} {Mathematics}},
	title = {Domain {Decomposition} {Methods} — {Algorithms} and {Theory}},
	volume = {34},
	isbn = {978-3-540-20696-5 978-3-540-26662-4},
	language = {en},
	publisher = {Springer},
	author = {Toselli, Andrea and Widlund, Olof B.},
	year = {2005},
	doi = {10.1007/b137868},
	keywords = {Sobolev space, algorithms, domain decomposition, finite elements, linear algebra, partial differential equation, preconditioning, spectral elements, statistics},
}

@article{schleusoptimal2022,
	title = {Optimal {Local} {Approximation} {Spaces} for {Parabolic} {Problems}},
	volume = {20},
	issn = {1540-3459},
	doi = {10.1137/20M1384294},
	abstract = {.We target time-dependent partial differential equations (PDEs) with heterogeneous coefficients in space and time. To tackle these problems, we construct reduced basis/multiscale ansatz functions defined in space that can be combined with time stepping schemes within model order reduction or multiscale methods. To that end, we propose to perform several simulations of the PDE for a few time steps in parallel starting at different, randomly drawn start points, prescribing random initial conditions; applying a singular value decomposition to a subset of the so obtained snapshots yields the reduced basis/multiscale ansatz functions. This facilitates constructing the reduced basis/multiscale ansatz functions in an embarrassingly parallel manner. In detail, we suggest using a data-dependent probability distribution based on the data functions of the PDE to select the start points. Each local in time simulation of the PDE with random initial conditions approximates a local approximation space in one time point that is optimal in the sense of Kolmogorov. The derivation of these optimal local approximation spaces which are spanned by the left singular vectors of a compact transfer operator that maps arbitrary initial conditions to the solution of the PDE in a later point of time is one other main contribution of this paper. By solving the PDE locally in time with random initial conditions, we construct local ansatz spaces in time that converge provably at a quasi-optimal rate and allow for local error control. Numerical experiments demonstrate that the proposed method can outperform existing methods like the proper orthogonal decomposition even in a sequential setting and is well capable of approximating advection-dominated problems.},
	number = {1},
	journal = {Multiscale Modeling \& Simulation},
	author = {Schleuß, Julia and Smetana, Kathrin},
	month = mar,
	year = {2022},
	pages = {551--582},
}

@article{martinssonrandomized2020,
	title = {Randomized numerical linear algebra: {Foundations} and algorithms},
	volume = {29},
	shorttitle = {Randomized numerical linear algebra},
    doi = {10.1017/S0962492920000021},
	journal = {Acta Numerica},
	author = {Martinsson, Per-Gunnar and Tropp, Joel A.},
	year = {2020},
	pages = {403--572},
}

@article{calorandomized2016,
	title = {Randomized {Oversampling} for {Generalized} {Multiscale} {Finite} {Element} {Methods}},
	volume = {14},
	issn = {1540-3459},
	doi = {10.1137/140988826},
	abstract = {In this paper, we present a mixed generalized multiscale finite element method (GMsFEM) for solving flow in heterogeneous media. Our approach constructs multiscale basis functions following a GMsFEM framework and couples these basis functions using a mixed finite element method, which allows us to obtain a mass conservative velocity field. To construct multiscale basis functions for each coarse edge, we design a snapshot space that consists of fine-scale velocity fields supported in a union of two coarse regions that share the common interface. The snapshot vectors have zero Neumann boundary conditions on the outer boundaries, and we prescribe their values on the common interface. We describe several spectral decompositions in the snapshot space motivated by the analysis. In the paper, we also study oversampling approaches that enhance the accuracy of mixed GMsFEM. A main idea of oversampling techniques is to introduce a small dimensional snapshot space. We present numerical results for two-phase flow and transport, without updating basis functions in time. Our numerical results show that one can achieve good accuracy with a few basis functions per coarse edge if one selects appropriate offline spaces.},
	number = {1},
	journal = {Multiscale Modeling \& Simulation},
	author = {Calo, Victor M. and Efendiev, Yalchin and Galvis, Juan and Li, Guanglian},
	month = jan,
	year = {2016},
	pages = {482--501},
}

@article{babuskamultiscalespectral2020,
	title = {Multiscale-{Spectral} {GFEM} and optimal oversampling},
	volume = {364},
	issn = {0045-7825},
	doi = {10.1016/j.cma.2020.112960},
	abstract = {In this work we address the Multiscale Spectral Generalized Finite Element Method (MS-GFEM) developed in Babuška and Lipton (2011). We outline the numerical implementation of this method and present simulations that demonstrate contrast independent exponential convergence of MS-GFEM solutions. We introduce strategies to reduce the computational cost of generating the optimal oversampled local approximating spaces used here. These strategies retain accuracy while reducing the computational work necessary to generate local bases. Motivated by oversampling we develop a nearly optimal local basis based on a partition of unity on the boundary and the associated A-harmonic extensions.},
	journal = {Computer Methods in Applied Mechanics and Engineering},
	author = {Babuška, Ivo and Lipton, Robert and Sinz, Paul and Stuebner, Michael},
	month = jun,
	year = {2020},
	keywords = {Contrast independent convergence rate, Generalized Finite Element Method (GFEM), Multiscale methods, Multiscale-Spectral Generalized Finite Element Method (MS-GFEM), Oversampling, Partition of unity method (PUM)},
	pages = {112960},
}

@article{babuskamachine2014,
	title = {Machine {Computation} {Using} the {Exponentially} {Convergent} {Multiscale} {Spectral} {Generalized} {Finite} {Element} {Method}},
	volume = {48},
	issn = {0764-583X, 1290-3841},
	doi = {10.1051/m2an/2013117},
	abstract = {A multiscale spectral generalized finite element method (MS-GFEM) is presented for the solution of large two and three dimensional stress analysis problems inside heterogeneous media. It can be employed to solve problems too large to be solved directly with FE techniques and is designed for implementation on massively parallel machines. The method is multiscale in nature and uses an optimal family of spectrally defined local basis functions over a coarse grid. It is proved that the method has an exponential rate of convergence. To fix ideas we describe its implementation for a two dimensional plane strain problem inside a fiber reinforced composite. Here fibers are separated by a minimum distance however no special assumption on the fiber configuration such as periodicity or ergodicity is made. The implementation of MS-GFEM delivers the discrete solution operator using the same order of operations as the number of fibers inside the computational domain. This implementation is optimal in that the number of operations for solution is of the same order as the input data for the problem. The size of the MS-GFEM matrix used to represent the discrete inverse operator is controlled by the scale of the coarse grid and the convergence rate of the spectral basis and can be of order far less than the number of fibers. This strategy is general and can be applied to the solution of very large FE systems associated with the discrete solution of elliptic PDE.},
	language = {en},
	number = {2},
	journal = {ESAIM: Mathematical Modelling and Numerical Analysis},
	author = {Babuška, Ivo and Huang, Xu and Lipton, Robert},
	month = mar,
	year = {2014},
	keywords = {Generalized finite elements, fiber reinforced composites, heterogeneous media, multiscale method, spectral method},
	pages = {493--515},
}

@article{babuskaoptimal2011,
	title = {Optimal {Local} {Approximation} {Spaces} for {Generalized} {Finite} {Element} {Methods} with {Application} to {Multiscale} {Problems}},
	volume = {9},
	issn = {1540-3459, 1540-3467},
	doi = {10.1137/100791051},
	language = {en},
	number = {1},
	journal = {Multiscale Modeling \& Simulation},
	author = {Babuska, Ivo and Lipton, Robert},
	month = jan,
	year = {2011},
	pages = {373--406},
}

@article{gustafssonscikitfem2020,
	title = {scikit-fem: {A} {Python} package for finite element assembly},
	volume = {5},
	issn = {2475-9066},
	shorttitle = {scikit-fem},
	doi = {10.21105/joss.02369},
	abstract = {Gustafsson et al., (2020). scikit-fem: A Python package for finite element assembly. Journal of Open Source Software, 5(52), 2369, https://doi.org/10.21105/joss.02369},
	language = {en},
	number = {52},
	journal = {Journal of Open Source Software},
	author = {Gustafsson, Tom and McBain, G. D.},
	month = aug,
	year = {2020},
	pages = {2369},
}

@article{halkofinding2011,
	title = {Finding {Structure} with {Randomness}: {Probabilistic} {Algorithms} for {Constructing} {Approximate} {Matrix} {Decompositions}},
	volume = {53},
	issn = {0036-1445},
	shorttitle = {Finding {Structure} with {Randomness}},
	doi = {10.1137/090771806},
	abstract = {This paper describes a suite of algorithms for constructing low-rank approximations of an input matrix from a random linear image, or sketch, of the matrix. These methods can preserve structural properties of the input matrix, such as positive-semidefiniteness, and they can produce approximations with a user-specified rank. The algorithms are simple, accurate, numerically stable, and provably correct. Moreover, each method is accompanied by an informative error bound that allows users to select parameters a priori to achieve a given approximation quality. These claims are supported by numerical experiments with real and synthetic data.},
	number = {2},
	journal = {SIAM Review},
	author = {Halko, N. and Martinsson, P. G. and Tropp, J. A.},
	month = jan,
	year = {2011},
	pages = {217--288},
}

@book{brennermathematical2008,
	address = {New York, NY},
	series = {Texts in {Applied} {Mathematics}},
	title = {The {Mathematical} {Theory} of {Finite} {Element} {Methods}},
	volume = {15},
	isbn = {978-0-387-75933-3 978-0-387-75934-0},
	publisher = {Springer},
	author = {Brenner, Susanne C. and Scott, L. Ridgway},
	editor = {Marsden, J. E. and Sirovich, L. and Antman, S. S.},
	year = {2008},
	doi = {10.1007/978-0-387-75934-0},
	keywords = {Sobolev space, algorithm, algorithms, construction, finite element method, finite elements, functional analysis, numerical analysis, operator},
}

@techreport{karypismetis1997,
	type = {Report},
	title = {{METIS}: {A} {Software} {Package} for {Partitioning} {Unstructured} {Graphs}, {Partitioning} {Meshes}, and {Computing} {Fill}-{Reducing} {Orderings} of {Sparse} {Matrices}},
	shorttitle = {{METIS}},
	url = {http://conservancy.umn.edu/handle/11299/215346},
	abstract = {Metis is copyrighted by the regents of the University of Minnesota. This work was supponed by IST/BMDO through Army Research Office
contract DA/DAAH04-93-G-0080. and by Army High Performance Computing Research Center under the auspices of the Department of the Army.
Anny Research Laboratory cooperative agreement number DAAH04-95-2-0003/contract number DAAH04-95-C-0008, the content of which does
not necessarily reflect the position or the policy of lhe government, and no official endorsement should be inferred. Access to computing facilities
were provided by Minnesota Supercomputer Institute, Cray Research Inc, and by the Pittsburgh Supercomputing Center.},
	language = {en\_US},
	urldate = {2025-01-10},
	author = {Karypis, George and Kumar, Vipin},
	year = {1997},
    note = {http://conservancy.umn.edu/handle/11299/215346}
}

@article{hannukainendistributed2022,
	title = {Distributed {Solution} of {Laplacian} {Eigenvalue} {Problems}},
	volume = {60},
	issn = {0036-1429},
	doi = {10.1137/20M1342653},
	abstract = {We first consider the problem of approximating a few eigenvalues of a rational matrix-valued function closest to a prescribed target. It is assumed that the proper rational part of the rational matrix-valued function is expressed in the transfer function form \$H(s) = C (sI - A){\textasciicircum}\{-1\} B\$, where the middle factor is large, whereas the number of rows of \$C\$ and the number of columns of \$B\$ are equal and small. We propose a subspace framework that performs two-sided or one-sided projections on the state-space representation of \$H({\textbackslash}cdot)\$, commonly employed in model reduction and giving rise to a reduced transfer function. At every iteration, the projection subspaces are expanded to attain Hermite interpolation conditions at the eigenvalues of the reduced transfer function closest to the target, which in turn leads to a new reduced transfer function. We prove in theory that, when a sequence of eigenvalues of the reduced transfer functions converges to an eigenvalue of the full problem, it converges at least at a quadratic rate. In the second part, we extend the proposed framework to locate the eigenvalues of a general square large-scale nonlinear meromorphic matrix-valued function \$T({\textbackslash}cdot)\$, where we exploit a representation \${\textbackslash}mathcal\{R\}(s) = C(s) A(s){\textasciicircum}\{-1\} B(s) - D(s)\$ defined in terms of the block components of \$T({\textbackslash}cdot)\$. The numerical experiments illustrate that the proposed framework is reliable in locating a few eigenvalues closest to the target point, and that, with respect to runtime, it is competitive to established methods for nonlinear eigenvalue problems.},
	number = {1},
	journal = {SIAM Journal on Numerical Analysis},
	author = {Hannukainen, Antti and Malinen, Jarmo and Ojalammi, Antti},
	month = feb,
	year = {2022},
	pages = {76--103},
}

@article{buhrrandomized2018,
	title = {Randomized {Local} {Model} {Order} {Reduction}},
	volume = {40},
	issn = {1064-8275},
	doi = {10.1137/17M1138480},
	abstract = {In this paper we introduce local approximation spaces for component-based static condensation (sc) procedures that are optimal in the sense of Kolmogorov. To facilitate simulations for large structures such as aircraft or ships, it is crucial to decrease the number of degrees of freedom on the interfaces, or “ports,” in order to reduce the size of the statically condensed system. To derive optimal port spaces we consider a (compact) transfer operator that acts on the space of harmonic extensions on a two-component system and maps the traces on the ports that lie on the boundary of these components to the trace of the shared port. Solving the eigenproblem for the composition of the transfer operator and its adjoint yields the optimal space. For a related work in the context of the generalized finite element method, we refer the reader  to [I. Babuška and R. Lipton, Multiscale Model. Simul., 9 (2011), pp. 373--406]. We further introduce a spectral greedy algorithm to generalize the procedure to the parameter-dependent setting and to construct a quasi-optimal parameter-independent port space. Moreover, it is shown that, given a certain tolerance and an upper bound for the ports in the system, the spectral greedy constructs a port space that yields an sc approximation error on a system of arbitrary configuration which is smaller than this tolerance for all parameters in a rich train set. We present our approach for isotropic linear elasticity, although the idea may be readily applied to any linear coercive problem. Numerical experiments demonstrate the very rapid and exponential convergence both of the eigenvalues and of the sc approximation based on spectral modes for nonseparable and irregular geometries such as an I-beam with an internal crack.},
	number = {4},
	journal = {SIAM Journal on Scientific Computing},
	author = {Buhr, Andreas and Smetana, Kathrin},
	month = jan,
	year = {2018},
	pages = {A2120--A2151},
}

@misc{sourcepackage,
  author       = {Kohonen, Vili and
                  Gustafsson, Tom},
  title        = {{Numerical experiments for "Hybrid Nitsche method for Distributed Computing"}},
  month        = sep,
  year         = 2025,
  publisher    = {Zenodo},
  doi          = {10.5281/zenodo.17152450},
  note         = {Zenodo 10.5281/zenodo.15148878}
}

@book{briggs2000multigrid,
  title={A Multigrid Tutorial},
  author={Briggs, William L. and Henson, Van Emden and McCormick, Steve F.},
  edition={2nd},
  year={2000},
  publisher={SIAM},
  address={Philadelphia, PA},
  isbn={9780898714623},
  doi={10.1137/1.9780898719505}
}

@book{trottenberg2001multigrid,
  title={Multigrid},
  author={Trottenberg, Ulrich and Oosterlee, Cornelis W. and Sch{\"u}ller, Anton},
  year={2001},
  publisher={Academic Press},
  address={London},
  isbn={9780127010700}
}

@article{falgout2006introduction,
  title={An Introduction to Algebraic Multigrid},
  author={Falgout, Robert D.},
  journal={Computing in Science \& Engineering},
  volume={8},
  number={6},
  pages={24--33},
  year={2006},
  publisher={IEEE},
  doi={10.1109/MCSE.2006.105}
}
\bibliographystyle{plain}

\end{document}